\tikzstyle{block}=[draw opacity=0.7,line width=1.4cm]
\newtheorem{thm}{Theorem}
\newtheorem{lem}{Lemma}
\newtheorem{prop}{Proposition}
\newtheorem{define}{Definition}
\newcommand{\ZZ}{\mathbb{Z}}     % Integers
\newcommand{\QQ}{\mathbb{Q}}      %Rationals
\newcommand{\PP}{\mathbb{P}}      % Projective space
\newcommand{\OO}{\mathcal{O}}    % Fancy script O
\newcommand{\CC}{\mathbb{C}}      % Complex Numbers
\newcommand{\FF}{\mathbb{F}}      % Finite field
\newcommand{\pp}{\mathfrak{p}}   % prime ideal
\newcommand{\var}{\tt}      % variables in code
\newcommand{\Manoa}{M\=anoa}
\newcommand{\Hawaii}{Hawai\kern.05em`\kern.05em\relax i}
\providecommand{\PGL}{\mathop{\rm PGL}\nolimits}
\title[Quadratic PCF functions]{A census of quadratic post-critically finite\\ rational functions defined over $\QQ$}
 \date{\today}
\author[D. Lukas, M. Manes, D. Yap]{David Lukas, Michelle Manes, and Diane Yap}
\begin{document}

\maketitle

    \begin{abstract}
We find all quadratic post-critically finite (PCF) rational maps defined over $\QQ$.  We describe an algorithm to search for possibly PCF maps.  Using  the algorithm, we eliminate all but twelve rational maps, all of which are verifiably PCF.  We also give a complete description of possible rational preperiodic  structures  for quadratic PCF maps defined over $\QQ$.   
\end{abstract}

keywords: PCF, post-critically finite, quadratic rational maps, arithmetic dynamics

\section{Introduction}\label{sec:intro}

Let $\phi(z) \in \QQ(z)$ have degree $d\geq 2$.  We may regard $\phi:\PP^1 \to \PP^1$ as a morphism of the projective line.  We consider iterates of $\phi$:
\[
\phi^n(z) = \underbrace{\phi\circ\phi\circ\cdots\circ\phi}_{n\textup{ times}} (z),
 \quad \textup{ and } \quad \phi^0(z) = z.
\]
The orbit of a point $\alpha \in \PP^1$ is the set
$\OO_\phi(\alpha) = \left\{ \phi^n(\alpha) \mid n \geq 0 \right\}$.

Rather than studying individual rational maps, we  consider equivalence classes of  maps under conjugation by   $f \in\PGL_2(\CC)$; we define
$\phi^f = f \circ \phi \circ f^{-1}$. 
Note that $\phi$ and $ \phi^f$ have the same 
dynamical behavior.  In particular,  $f$ maps the $\phi$-orbit of $\alpha$ to the $\phi^f$-orbit of $f(\alpha)$.  

Critical points of $\phi$ are the points $\alpha \in \PP^1$ such that $\phi'(\alpha) = 0$ as long as $\alpha$ and $\phi(\alpha)$ are finite.  To compute the derivative at the excluded values of $\alpha$, we use a  conjugate map.  See~\cite[Section 1.2]{ads} for details.

\begin{define}
A rational map  $\phi: \PP^1 \to \PP^1$ of degree $d\geq 2$ is \emph{postcritically finite}  (PCF) if the  orbit of each  critical point is finite.  
\end{define}

A fundamental observation in the study of one-dimensional complex dynamics is that the orbits of the finite set of critical points of $\phi$  largely determines the dynamics of $\phi$ on all of $\PP^1$.  So the study of PCF maps has a long history in complex dynamics, including Thurston's topological characterization of these maps in the early 1980s and continuing to the present day.

  In~\cite{pilgrimcensus}, for example, the authors 
find exactly one representative from each conjugacy class of nonpolynomial hyperbolic
PCF  rational maps of degree 2 and 3 in which the post-critical set --- the forward orbit of the critical points, excluding the points themselves --- contains no more than four points.
In~\cite{MdADS}, Silverman advances the idea of PCF maps as a  dynamical analog of abelian varieties with complex multiplication, suggesting that these maps may be of special interest in arithmetic dynamics as well. 

Our main result is inspired by these ideas.  Compare this to the statement that, up to isomorphism over $\bar \QQ$, there are exactly thirteen elliptic curves $E / \QQ$ with complex multiplication.

\begin{thm}\label{all pcf}
There are exactly twelve $\bar \QQ$ conjugacy classes of quadratic PCF maps defined over $\QQ$\textup{:}

\begin{tabular} {l    l      l      l}
\textup{(1)} \  $z^2$    &
\textup{(2)} \  $\dfrac{1}{z^2}$ &
\textup{(3)} \   $z^2-2$  &
\textup{(4)} \  $z^2-1$\\
\textup{(5)} \  $\dfrac{1 }{ 2(z-1)^2}    $ & 
\textup{(6)} \ $ \dfrac{1 }{ (z - 1)^2} $& 
\textup{(7)} \ $ \dfrac{-1}{4z^2-4z}$& 
\textup{(8)} \ $\dfrac{-4}{9z^2-12z}$\\
\textup{(9)}\  $\dfrac{2}{(z-1)^2} $ &
\textup{(10)}\  $\dfrac{2 z+1}{4 z-2 z^2} $& 
\textup{(11)}\  $\dfrac{-2 z}{2 z^2-4 z+1} $& 
\textup{(12)}\ $ \dfrac{3 z^2-4 z+1}{1-4 z}$
\end{tabular}

\end{thm}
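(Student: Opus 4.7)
The plan is to reduce Theorem~\ref{all pcf} to a finite enumeration of combinatorial \emph{portraits} of critical orbits, and then, for each such portrait, solve a system of polynomial equations in the parameters of a suitable normal form for quadratic rational maps.

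First, I would exploit $\PGL_2(\bar\QQ)$-conjugation. The moduli space $M_2$ of quadratic rational maps up to conjugation is two-dimensional, so one can pick an explicit normal form with two parameters --- for instance by placing both critical points at distinguished points of $\PP^1$, or by using Milnor's parametrization of $M_2$ by symmetric functions of fixed-point multipliers. Since being PCF is a conjugation-invariant property, I may carry out the entire search in terms of these parameters, and at the end check which parameter values correspond to $\bar\QQ$-conjugacy classes with a representative defined over $\QQ$.

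Second, for each candidate dynamical behavior I would record a portrait: for each critical point $c_i$, a preperiod $t_i \geq 0$ and an eventual period $p_i \geq 1$, together with any coincidences between the two critical orbits. Such a portrait translates into a system of polynomial equations $\phi^{t_i + p_i}(c_i) = \phi^{t_i}(c_i)$ together with non-coincidence inequations in the two parameters. I would solve each system using resultants or Gr\"obner bases over $\QQ$, extract the $\QQ$-rational solutions, and finally verify by direct iteration that each of the twelve listed maps really is PCF.

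The main obstacle, and the heart of the argument, is reducing to \emph{finitely many} portraits, since \textit{a priori} the preperiods and periods are unbounded. Here I would appeal to arithmetic dynamics: for any prime $p$ of good reduction, the critical orbits embed into those of the reduced map on $\PP^1(\FF_p)$, giving $t_i + p_i \leq p+1$; combining reductions at several small primes bounds the portraits sharply, and canonical-height estimates for preperiodic points of rational functions over $\QQ$ give additional constraints. I expect the paper's ``algorithm'' to interleave this mod-$p$ filtering with the polynomial solving step so that the full search over portraits remains computationally tractable and provably exhaustive.
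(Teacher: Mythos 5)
The heart of your argument---making the search \emph{provably exhaustive}---has a genuine gap. Your finiteness step rests on the claim that for a prime $p$ of good reduction ``the critical orbits embed into those of the reduced map on $\PP^1(\FF_p)$, giving $t_i+p_i\le p+1$.'' Reduction modulo $p$ does \emph{not} embed orbits: distinct points of a global cycle routinely collapse modulo $p$, which is exactly why the relation between the global period $n$ and the residual period $m$ in Theorem~\ref{periods} is $n=m$, $n=mr$, or $n=mrp^e$ rather than $n=m$; and the preperiodic tail length is constrained even less by reduction. So the bound $t_i+p_i\le p+1$ is false, and no uniform bound on portraits follows. Worse, even a correct per-map statement would not suffice: you need a bound on portraits that is uniform over the \emph{infinite} two-parameter family (the set of good primes, and the quantities $r$, vary with the map), and your appeal to ``canonical-height estimates'' does not supply one---preperiodic points have canonical height zero, which says nothing about the length of a critical orbit. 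Without a uniform bound, your list of polynomial systems is not known to be finite, and the enumeration is not exhaustive.

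The paper obtains finiteness from a completely different ingredient: the theorem of Benedetto--Ingram--Jones--Levy (Lemma~\ref{Height Bound}) that a fixed-point multiplier $\lambda$ of a quadratic PCF map satisfies $H(\lambda)\le 4$, which via Proposition~\ref{derived height bound} gives $H(\sigma_1)\le 192$ and $H(\sigma_2)\le 12288$ in the normal form of Theorem~\ref{MMYY normal form}. This bounds the \emph{maps} (finitely many rational pairs $(\sigma_1,\sigma_2)$), not the portraits; the mod-$p$ period data of Theorem~\ref{periods} is then used only as a filter to discard candidates whose possible critical-cycle lengths have empty intersection across many primes, and the ten survivors are verified PCF by direct iteration (the symmetry locus, yielding $z^2$ and $1/z^2$, is treated separately via Jones--Manes). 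If you want to salvage your portrait-by-portrait Gr\"obner strategy, you must either import this multiplier height bound to bound the parameters first, or find some other uniform bound on critical orbit lengths for quadratic PCF maps over $\QQ$; the mod-$p$ ``embedding'' you propose cannot play that role.
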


Of these, the first four were well-known to researchers in both complex and arithmetic dynamics.  Maps $(5)$--$(8)$ appeared in~\cite{pilgrimcensus}.     Maps (9)--(12) did not appear in~\cite{pilgrimcensus} because they fail to fit either the criterion of hyperbolicity or the post critical set is too large.  One major contribution of this work is the fact that this list is exhaustive.  

A fundamental problem in arithmetic dynamics is classifying rational functions by the structure of their rational preperiodic points.  In~\cite{poonenrefined}, Poonen undertakes this task for quadratic polynomials defined over $\QQ$, subject to the condition that no rational point is on a cycle of length greater than~$3$.  In~\cite{manespreper}, Manes gives a  classification for quadratic rational maps with nontrivial $\PGL_2$ stabilizer, subject to a similar condition.  
Given the comprehensive list  in Theorem~\ref{all pcf}, we  are able to describe all possible  rational preperiodic structures for quadratic PCF maps defined over $\QQ$ with no additional hypotheses.  Difficulty arises only for the first two maps, which have nontrivial twists.  We are able to conclude the following.

\begin{thm}\label{cor:preper}
A quadratic PCF map defined over $\QQ$ has at most six rational preperiodic points.  
\end{thm}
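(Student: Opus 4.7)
The plan is to combine Theorem~\ref{all pcf} with a $\QQ$-twist analysis. By Theorem~\ref{all pcf}, every quadratic PCF map defined over $\QQ$ lies in one of twelve $\bar\QQ$-conjugacy classes; however, a single class may contain several non-$\QQ$-conjugate $\QQ$-rational representatives (twists), and distinct twists can have different sets of rational preperiodic points. The proof therefore divides into (i) computing the rational preperiodic points for each of the twelve listed representatives, and (ii) enumerating all $\QQ$-twists of each class and bounding the rational preperiodic set in every twist.

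For part (i), since each $\phi_i$ is PCF its critical orbit is finite, and any rational preperiodic point eventually maps into the union of rational cycles and the critical orbit; iteratively computing $\phi_i^{-1}(\gamma) \cap \PP^1(\QQ)$ for $\gamma$ already known to be preperiodic therefore terminates. A direct calculation shows that each of the twelve listed representatives has at most six rational preperiodic points, with the bound attained by $\phi(z) = z^2 - 2$, whose preperiodic set is $\{-2,-1,0,1,2,\infty\}$.

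For part (ii), one computes $\mathrm{Aut}_{\bar\QQ}(\phi_i)$ for each map. A short case-by-case check shows this group is trivial for each of (3)--(12), so those classes admit no nontrivial $\QQ$-twists and part (i) suffices. The remaining cases are (1) $\phi = z^2$, with $\mathrm{Aut}_{\bar\QQ}(\phi) = \langle z \mapsto 1/z \rangle \cong \ZZ/2\ZZ$, and (2) $\phi = 1/z^2$, whose $\bar\QQ$-automorphism group is the non-abelian $S_3$ generated by $z \mapsto 1/z$ and $z \mapsto \zeta_3 z$. For $z^2$, the $\QQ$-twists are classified by $H^1(G_\QQ, \ZZ/2\ZZ) = \QQ^*/(\QQ^*)^2$; a natural family of representatives is $\phi_d(z) = (z^2 + d)/(2z)$ with critical and fixed points $\pm \sqrt{d}$ and twist field $K = \QQ(\sqrt{d})$. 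A Galois descent argument identifies the $\QQ$-rational preperiodic points of $\phi_d$ with the elements $\beta \in K$ preperiodic for $z^2$ satisfying $\sigma(\beta) = 1/\beta$ for the nontrivial $\sigma \in \mathrm{Gal}(K/\QQ)$. The preperiodic set of $z^2$ is $\{0, \infty\}$ together with the roots of unity; the points $0$ and $\infty$ fail the descent condition for any nontrivial twist, and the roots of unity in any quadratic field form a cyclic group of order at most six, so at most six rational preperiodic points occur in any twist. The bound is attained by $\phi_{-3}(z) = (z^2 - 3)/(2z)$, with preperiodic set $\{0, \infty, \pm 1, \pm 3\}$. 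The twists of $1/z^2$ split according to the critical-point pair into case (A), both critical points rational, which reduces to the family $c/z^2$ parameterized by $\QQ^*/(\QQ^*)^3$ and yields at most four rational preperiodic points, and case (B), critical points Galois-conjugate in a quadratic extension, which is analogous to the analysis of $z^2$ and again yields at most six.

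The main obstacle is the twist analysis for $1/z^2$, whose non-abelian automorphism group $S_3$ requires a cohomology computation with non-abelian coefficients and forces the explicit case split into (A) and (B). In every case, however, the bound of six is ultimately forced by the elementary observation that a root of unity contained in a field of degree at most three over $\QQ$ has order dividing six, reducing the descent condition to checking a finite, small list of candidate rational preperiodic points for each twist.
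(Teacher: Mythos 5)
Your overall strategy is the same as the paper's: the ten trivial-stabilizer classes admit no nontrivial twists, so their rational preperiodic sets can simply be computed (Table~\ref{tab: trivial preper}), and the substance of the theorem is the twist analysis for $z^2$ and $1/z^2$ carried out in Section~\ref{sec:symmlocus}. Your treatment of the twists of $z^2$ is essentially correct and is a clean Galois-descent reformulation of what the paper does via the results of~\cite{manespreper} together with Theorem~\ref{thm:z^2 periodic}: the descent condition $\sigma(\beta)=1/\beta$ and the fact that a quadratic field contains at most six roots of unity do give the bound there, and your example $(z^2-3)/(2z)$ attaining six is correct.

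The gap is in your case (B) for $1/z^2$, which you dismiss as ``analogous to $z^2$.'' It is not: the conjugacy carrying such a twist to $\psi_2$ is defined over a field of degree up to six (the paper's $\QQ\bigl(\sqrt d,\,t^{1/3}\bigr)$ with $t=(k-\sqrt d)/(k+\sqrt d)$), so the candidate points $\beta$ satisfying the nonabelian descent condition $\tau(\beta)=c_\tau(\beta)$ range over \emph{all} preperiodic points of $1/z^2$ of degree at most six: besides $0$ and $\infty$ these include $\pm\zeta_5^j$, $\pm\zeta_7^j$, $\pm\zeta_9^j$, $\pm\zeta_8^j$, $\pm\zeta_{12}^j$, i.e.\ roots of unity of order up to $18$, exactly the list of Lemma~\ref{lem:psi2orbits}. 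Your closing reduction --- ``a root of unity in a field of degree at most three has order dividing six'' --- is both false as stated ($i$ lies in a quadratic field and has order $4$) and inapplicable, since the relevant degree is six. More importantly, even with the finite candidate list in hand, the bound of six does not follow from any order-of-roots-of-unity count: a priori a twist in case (B) could possess a rational $4$- or $6$-cycle together with its rational tail points (twelve rational preperiodic points), or could combine three rational fixed points with rational points of type $1_1$ and $1_2$ (eight or more). Eliminating these possibilities is precisely the content of Propositions~\ref{prop:fpstruct}, \ref{prop:3cyc} and~\ref{prop: cyc}: one checks that a conjugation placing a putative rational $4$- or $6$-cycle onto powers of $\zeta_5$ or $\zeta_7$ cannot be defined over $\QQ$, and one verifies on explicit normalized representatives such as $-(z-2)z/(2z-1)$ and $(-z^2+2z+1)/(z^2+2z-1)$ that the fixed-point structures cannot be enlarged or combined. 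Your proposal needs this step (or a substitute for it) before the bound of six is established in case (B); the $S_3$-cohomology framing and the split into cases (A) and (B) are fine, but the descent condition must actually be solved against the full degree-$\le 6$ list rather than against roots of unity of order dividing six.
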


Given the parallels between the set of rational preperiodic points for a rational map and the torsion subgroup of an abelian variety $A(\QQ)$ (see~\cite[page 111]{MdADS}, for example) this result and the preperiodic structures given in Sections~\ref{sec:PCFnoSym} and~\ref{sec:symmlocus} are analogs of the comprehensive list of torsion subgroups for CM elliptic curves $E/\QQ$  in~\cite{OlsonCM}.

\section{Background}\label{sec:bg}

Since $\phi(z) \in \QQ(z)$ is quadratic, the fixed points of $\phi$ are roots of a cubic polynomial found by setting $\phi(z)=z$.  The multiplier at a finite fixed point $\alpha$ is  $\phi'(\alpha)$, and when the point at infinity is fixed we compute the multiplier for that point using a conjugate map.   The following result is used to iterate through equivalence classes of quadratic rational maps with trivial $\PGL_2$ stabilizer. Determining which rational functions  with nontrivial stabilizer are PCF is  addressed  in Section~\ref{sec:symmlocus}.

\begin{thm}\cite[Lemma 3.1]{MMYY}
\label{MMYY normal form} Let $K$ be a field with characteristic different from $2$ and~$3$.
Let $\psi(z) \in K(z)$ have degree~$2$, and let $\lambda_1, \lambda_2, \lambda_3 \in \overline K$ be the multipliers of the fixed points of $\psi$ \textup(counted with multiplicity\textup). Then $\psi(z)$ is conjugate over $K$ to the map
\begin{equation}
\phi(z) = \frac{2 z^2 + (2-\sigma_1) z + (2-\sigma_1)}{-z^2 + (2+\sigma_1)z + 2-\sigma_1-\sigma_2} \in K(z),
\label{eqn: normal form}
\end{equation}
where $\sigma_1 $ and $\sigma_2$ are the first two symmetric functions of the multipliers.  Furthermore, no two distinct maps of this form are conjugate to each other over~$\overline K$.

\end{thm}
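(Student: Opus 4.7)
The plan is to establish two things: an existence claim that every quadratic $\psi \in K(z)$ is $K$-conjugate to a map of the form~\eqref{eqn: normal form}, and a uniqueness claim that distinct parameter pairs $(\sigma_1, \sigma_2)$ give maps that are not $\overline{K}$-conjugate. First I would set up the correspondence over $\overline{K}$, then handle descent to $K$.

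For uniqueness and the $\overline{K}$-level existence, I would verify directly that the map $\phi$ of~\eqref{eqn: normal form} has multipliers whose first two elementary symmetric functions are precisely $\sigma_1$ and $\sigma_2$. This is a mechanical computation: the equation $\phi(z) = z$ produces a cubic in $z$ whose three roots are the fixed points, and one evaluates $\phi'$ at each root (using the conjugation trick cited in Section~\ref{sec:bg} for the point at infinity when it is fixed). The classical Milnor fixed-point identity $\sum_i (1 - \lambda_i)^{-1} = 1$, valid whenever no $\lambda_i = 1$, forces $\lambda_1 \lambda_2 \lambda_3 = \sigma_1 - 2$, so the third symmetric function is not free and the normal form is genuinely a two-parameter family. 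Multipliers are conjugation invariants, so distinct $(\sigma_1, \sigma_2)$ cannot give $\overline{K}$-conjugate maps; conversely, the moduli space of degree-$2$ rational maps modulo $\PGL_2$ has dimension $2$ by parameter counting, so every $\overline{K}$-conjugacy class is already realized by some $\phi$ in the normal form.

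The main obstacle is the descent from $\overline{K}$-conjugacy to $K$-conjugacy. Because $\psi$ is defined over $K$, the cubic whose roots are the multipliers has $K$-rational coefficients, so $\sigma_1, \sigma_2 \in K$ and the candidate target $\phi$ already lies in $K(z)$. When $\psi$ has trivial $\PGL_2$-stabilizer, the conjugating M\"obius transformation $f$ satisfying $\phi = f \circ \psi \circ f^{-1}$ is unique; the Galois conjugate $f^\tau$ also conjugates $\psi$ to $\phi$ (both are defined over $K$), so $f^\tau = f$ for every $\tau \in \mathrm{Gal}(\overline{K}/K)$, hence $f \in \PGL_2(K)$. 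Maps with nontrivial stabilizer require a separate argument because uniqueness of $f$ fails, and indeed this is precisely the case the present paper defers to Section~\ref{sec:symmlocus}.
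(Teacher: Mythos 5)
You should first note that the paper does not prove this result at all: it is quoted from \cite[Lemma 3.1]{MMYY}, so there is no internal proof to measure your argument against, and your outline has to stand on its own. It does not, because the existence step is a genuine gap. The inference ``the normal form is a two-parameter family, $\mathcal{M}_2$ has dimension $2$, hence every $\overline K$-conjugacy class is realized'' is not an argument: an injective two-parameter family mapping into a two-dimensional moduli space can perfectly well miss a curve of classes, and here it does. Concretely, $\psi(z)=z^2$ has multipliers $0,2,0$, so $(\sigma_1,\sigma_2)=(2,0)$, and the form \eqref{eqn: normal form} becomes $2z^2/(4z-z^2)$, whose numerator and denominator share the factor $z$; it is a degree-one map, so $z^2$ is not conjugate to it, and since conjugation preserves the multiplier spectrum, $z^2$ is not conjugate to the form for any other $(\sigma_1,\sigma_2)$ either. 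Likewise $\psi(z)=1/z^2$ gives $(\sigma_1,\sigma_2)=(-6,12)$, for which the form collapses to the constant $-2$. The degeneration locus (vanishing resultant) of the normal form is exactly the symmetry locus --- this is the content of \cite[Remark 3.2]{MMYY} as invoked in Section~\ref{sec:algorithm}, and it is why the paper only ever applies the theorem to maps with trivial $\PGL_2$ stabilizer and treats the symmetry locus separately in Section~\ref{sec:symmlocus}. So a correct treatment must (i) restrict to maps with trivial stabilizer (or otherwise exclude the symmetry locus) and (ii) replace the dimension count by a real surjectivity argument, e.g.\ the Milnor--Silverman isomorphism $\mathcal{M}_2\cong\mathbb{A}^2$ via $(\sigma_1,\sigma_2)$ combined with a verification that the resultant of \eqref{eqn: normal form} is nonzero off the symmetry locus, or an explicit construction of the conjugating map as in \cite{MMYY}.

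The remaining pieces are closer to correct but carry weight you have not discharged. The uniqueness claim and the $\overline K$-existence both rest entirely on the ``mechanical computation'' that the fixed-point multipliers of \eqref{eqn: normal form} have elementary symmetric functions exactly $\sigma_1$ and $\sigma_2$; that computation is the actual content of the lemma and cannot be waved off, and your caveat on the relation $\sigma_3=\sigma_1-2$ is unnecessary (it is a polynomial identity on all quadratic maps, with no exclusion for $\lambda_i=1$). Your descent step --- trivial stabilizer forces the $\overline K$-conjugating $f$ to be Galois-invariant, hence in $\PGL_2(K)$ by Hilbert 90 --- is the standard argument and is fine for the maps to which the paper actually applies the theorem. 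But by your own admission it says nothing about maps with nontrivial stabilizer, which the statement as quoted does cover, and for which, as shown above, the conclusion actually fails; so the correct move is to restrict the hypothesis of the statement, not to describe that case as ``deferred'' to Section~\ref{sec:symmlocus}.
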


 The following result gives a crucial bound on $H(\lambda)$, the standard multiplicative height of a fixed point multiplier for a quadratic PCF map.  (See~\cite[Section 3.1]{ads} for background on heights.)    Using the lemma, we  derive explicit height bounds for $\sigma_1$ and $\sigma_2$.

\begin{lem} \cite[Corollary 1.3]{PCF_height}
\label{Height Bound} 
Let $\phi(z) \in \overline{\QQ}(z)$ have degree~$2$, suppose that $\phi$ is PCF, and let $\lambda$ be the multiplier of any fixed point of $\phi$. Then $H(\lambda) \leq  4$.\end{lem}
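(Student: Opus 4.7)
The plan is to work over a number field $K$ containing $\phi$ and its fixed point $\alpha$ with multiplier $\lambda$, then bound $\log^+ |\lambda|_v$ at each place $v$ of $K$ and sum via the product formula. The key leverage will come from the classical holomorphic index relation together with a place-by-place count of non-repelling fixed points forced by the PCF condition.

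First I would invoke the identity that constrains the three fixed-point multipliers of any quadratic rational map: $\sum_i (1 - \lambda_i)^{-1} = 1$ (valid when no $\lambda_i = 1$), which rearranges to $\lambda_1 \lambda_2 \lambda_3 = \lambda_1 + \lambda_2 + \lambda_3 - 2$. Consequently every $\lambda_i$ is a root of the cubic $P(z) = z^3 - \sigma_1 z^2 + \sigma_2 z - (\sigma_1 - 2)$, with $\sigma_1, \sigma_2$ as in Theorem~\ref{MMYY normal form}. Since these symmetric functions lie in $K$, the heights of the three multipliers are coupled through $P$ and its coefficients, so bounding $H(\lambda)$ reduces to understanding the possible simultaneous magnitudes of all three multipliers at each place.

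Next I would bring in the PCF hypothesis through the Fatou--Shishikura inequality and its non-archimedean analog due to Rivera-Letelier and Benedetto: at every place $v$ of $K$, each non-repelling cycle of $\phi$ must absorb a distinct critical orbit. Since a quadratic map has only two critical points, at most two of the three fixed points can satisfy $|\lambda_i|_v \le 1$; equivalently, at least one fixed point is repelling at $v$. Combined with the product relation $\lambda_1 \lambda_2 \lambda_3 = \sigma_1 - 2$, this sharply constrains the distribution of $|\lambda_i|_v$ at each place.

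Finally, I would translate the local non-repelling count and the cubic relation into explicit inequalities for $\log^+ |\lambda|_v$, then sum over places and invoke the product formula applied to $\sigma_1$ and $\sigma_2$ to obtain $h(\lambda) \le \log 4$, hence $H(\lambda) \le 4$. The main obstacle will be making the archimedean estimate sharp enough to reach the clean constant $4$ --- which is attained by $\phi(z) = z^2 - 2$, whose fixed-point multipliers are $0, -2, 4$ --- so each local inequality must be essentially tight. The degenerate cases (parabolic fixed points with $\lambda_i = 1$ where the index formula requires adjustment, coincident multipliers, and places where several fixed points collide modulo $v$) will each need their own perturbation or limiting arguments drawing on the finiteness of the critical orbits.
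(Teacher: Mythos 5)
You should first note that the paper contains no proof of this lemma at all: it is imported verbatim from the cited reference of Benedetto--Ingram--Jones--Levy, so the real question is whether your sketch could stand in for that citation, and as written it cannot. The central problem is one of direction. The height $H(\lambda)=\prod_v\max\{|\lambda|_v,1\}^{n_v/d}$ needs an \emph{upper} bound on $|\lambda|_v$ at every place, whereas the two ingredients you lean on --- the index relation $\lambda_1\lambda_2\lambda_3=\lambda_1+\lambda_2+\lambda_3-2$ and a count of non-repelling fixed points --- only restrict how many multipliers can be \emph{small}; they place no cap on the repelling multiplier, which is exactly the one that makes $H(\lambda)$ large. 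Concretely, the spectra $(t,\,0,\,2-t)$ with $t$ arbitrary (these are realized by the quadratic polynomials $z^2+c$) satisfy your cubic relation and your non-repelling count at every place, yet $t$ is unbounded; so no argument built only from those constraints can yield $H(\lambda)\le 4$. Finiteness of the critical orbits must enter much more strongly: at archimedean places through the classical fact that a PCF map has no attracting-but-not-superattracting and no parabolic cycles \emph{together with} an escape/compactness bound (this is where the constant $4$ actually comes from in the polynomial case, via $|c|\le 2$ and integrality of $c$), and at non-archimedean places of residue characteristic $p>2$ through the main theorem of the cited paper, which shows that an attracting non-superattracting cycle forces an infinite critical orbit. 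That non-archimedean statement is the hard new content precisely because $p$-adic attracting basins need not contain critical points in general, which is also why your appeal to a non-archimedean Fatou--Shishikura inequality does not do what you want.

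Two of your specific assertions are, moreover, false or misapplied. The claim that at every place at most two of the three fixed points satisfy $|\lambda_i|_v\le 1$ fails at any place of good reduction: $\psi_2(z)=1/z^2$ is PCF, all three of its fixed-point multipliers equal $-2$, and so all three satisfy $|\lambda_i|_v=1$ at every odd prime; the non-archimedean results of Rivera-Letelier and Benedetto bound \emph{attracting} cycles, not non-repelling ones (indifferent cycles can be abundant $p$-adically). At the archimedean place, the ``at most two non-repelling fixed points'' statement is true for \emph{every} quadratic rational map --- it is just the index formula --- so in the only place where your count is correct, the PCF hypothesis never genuinely enters your argument. A global strategy in the spirit you describe can be made to work, but it must use PCF to show that each fixed-point multiplier is either $0$ or satisfies $|\lambda_i|_v\ge 1$ at the archimedean places and at all $v\nmid 2$, treat the superattracting (polynomial) case $\sigma_1=2$ separately, and then combine the product formula applied to $\lambda_1\lambda_2\lambda_3=\sigma_1-2$ with explicit control at $v\mid 2$ and at infinity; each of these inputs is exactly what your sketch omits or misstates, so the proposal has a genuine gap.
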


\begin{prop}\label{derived height bound}
 Let $\phi(z)\in \overline{\QQ}$ be a degree $2$ PCF map, and suppose that $\sigma_1$ and $\sigma_2$ are the first and second symmetric functions on the multipliers of the fixed points. Then $H(\sigma_1)\leq 192$ and $H(\sigma_2)\leq 12288$.
 \end{prop}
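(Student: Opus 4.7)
The plan is to derive both bounds as a direct application of standard multiplicative height inequalities to the elementary symmetric functions, using the preceding lemma as the input.

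First I would note that, as $\phi$ is a quadratic PCF map, each of the three fixed point multipliers $\lambda_1, \lambda_2, \lambda_3 \in \overline{\QQ}$ satisfies $H(\lambda_i) \leq 4$ by Lemma~\ref{Height Bound}. The quantities to be bounded are the elementary symmetric functions
\[
\sigma_1 = \lambda_1 + \lambda_2 + \lambda_3, \qquad \sigma_2 = \lambda_1\lambda_2 + \lambda_1\lambda_3 + \lambda_2\lambda_3,
\]
so what is needed is a way to pass from heights of the $\lambda_i$ to heights of sums and products.

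Next I would invoke the two standard height inequalities (see, e.g., \cite[Section 3.1]{ads}): for $\alpha_1, \ldots, \alpha_n \in \overline{\QQ}$,
\[
H(\alpha_1 \cdots \alpha_n) \leq \prod_{i=1}^n H(\alpha_i) \qquad \text{and} \qquad H(\alpha_1 + \cdots + \alpha_n) \leq n \prod_{i=1}^n H(\alpha_i).
\]
Applying the sum inequality to $\sigma_1$ with $n = 3$ and $H(\lambda_i) \leq 4$ yields $H(\sigma_1) \leq 3 \cdot 4^3 = 192$. For $\sigma_2$, I would first use the product inequality to bound each of the three terms by $H(\lambda_i \lambda_j) \leq H(\lambda_i) H(\lambda_j) \leq 16$, and then apply the sum inequality to obtain $H(\sigma_2) \leq 3 \cdot 16^3 = 12288$.

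This proof is essentially mechanical once Lemma~\ref{Height Bound} is in hand; there is no genuine obstacle, only bookkeeping. The only care required is in choosing the cleanest form of the sum-of-heights bound (here the factor of $n$ in front of the product) and checking that the constants multiply out to the stated values.
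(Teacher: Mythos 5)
Your proposal is correct and is essentially the paper's argument: the paper obtains exactly the bounds $H(\sigma_1)\leq 3H(\lambda_1)H(\lambda_2)H(\lambda_3)\leq 3\cdot 4^3$ and $H(\sigma_2)\leq 3H(\lambda_1\lambda_2)H(\lambda_2\lambda_3)H(\lambda_1\lambda_3)\leq 3\cdot 4^6$, merely proving the standard sum-and-product height inequalities by hand via the triangle inequality at each place rather than citing them. The constants agree ($3\cdot 16^3 = 3\cdot 4^6 = 12288$), so no gap remains.
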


\begin{proof*} 
We simplify notation by setting $d=[K:\QQ]$ for $K$ any field of definition of the fixed point multipliers.
By the triangle inequality: 
\[ 
|\sigma_1|_v = |\lambda_1 +\lambda_2+ \lambda_3|_v \leq
\begin{cases}
 \max{\{|\lambda_1|_v, |\lambda_2|_v, |\lambda_3|_v}\}
 & \text{for each finite place}\\
 3\max{\{|\lambda_1|_v, |\lambda_2|_v, |\lambda_3|_v}\}
& \text{for each infinite place.}
\end{cases}
\]

 For an extension of degree $d$, there are at most $d$ infinite places, so 
\begin{eqnarray*}
H(\sigma_1) &= & 
\prod_{v\in M_K} \left(\max\{|\sigma_1|_v, 1\}^{n_v}\right)^{1/d}
\leq 3\prod_{v\in M_K} \left(\max_{1\leq i \leq 3}\{|\lambda_i|_v, 1\}^{n_v}\right)^{1/d}\\
&\leq&
 3\prod_{v\in M_K}\left(\max\{|\lambda_1|_v, 1\}^{n_v}\cdot \max\{|\lambda_2|_v, 1\}^{n_v}\cdot \max\{|\lambda_3|_v, 1\}^{n_v}\right)^{1/d}\\
&= &
3H(\lambda_1)H(\lambda_2)H(\lambda_3) \leq 3\cdot4^3 = 192.
\end{eqnarray*}

The proof for the bound on $\sigma_2$ follows similarly:
\multbox
\begin{eqnarray*}
H(\sigma_2) 
&=&
 H(\lambda_1\lambda_2+\lambda_1\lambda_3+\lambda_2\lambda_3)
\leq 3\prod_{v\in M_K} \left(\max_{i\neq j \atop 1\leq i, j \leq 3}\{|\lambda_i\lambda_j|_v, 1\}^{n_v}\right)^{1/d}\\
& \leq & 
3\prod_{v\in M_K}\left(\max\{|\lambda_1\lambda_2|_v, 1\}^{n_v}\cdot \max\{|\lambda_2\lambda_3|_v, 1\}^{n_v}\cdot \max\{|\lambda_1\lambda_3|_v, 1\}^{n_v}\right)^{1/d}\\
&= &
3H(\lambda_1\lambda_2)H(\lambda_2\lambda_3)H(\lambda_1\lambda_3)
 \leq 3\cdot 4^6 = 12288. 
\end{eqnarray*}
\emultbox
\end{proof*}

This height bound coupled with the normal form of Theorem \ref{MMYY normal form} allows us to:
(1) iterate through possible rational values of $\sigma_1$ and $\sigma_2$,
(2) form a unique rational map from each equivalence class, and 
 (3) test if that map is PCF.

Step (3)  relies on the following two theorems.  For notation: $K$ is a local field with nonarchimedean absolute value $|\cdot|_v$,  $R$ is the ring of integers of $K$,  $\pp$ is the maximal ideal of $R$, $k = R/\pp$ is the residue field, and $\widetilde{\cdot}$ represents reduction  modulo $\pp$.  A morphism $\phi$ has good reduction at  $\pp$ if $\deg(\phi) = \deg(\widetilde{\phi})$.

\begin{thm}\cite[Theorem 2.21]{ads}
\label{periods}  Let $\phi: \PP^1 \to \PP^1$ be a rational function of degree $d\geq 2$ defined over $K$. Assume that $\phi$ has good reduction,  let $P\in \PP^1(K)$ be a periodic point of $\phi$, and define the following quantities:

\begin{tabular}{ll}
$n$& The exact period of $P$ for the map $\phi$.\\
$m$& The exact period of $\widetilde{P}$ for the map $\widetilde{\phi}$.\\
$r$& The order of $\lambda_{\widetilde{\phi}}(\widetilde{P}) = (\widetilde{\phi}^m)'(\widetilde{P})$ in $k^{\times}$. 
\textup(Set $r=\infty$ if $\lambda_{\widetilde{\phi}}(\widetilde{P})$ is not a root of unity.\textup)\\
$p$ & The characteristic of the residue field $k$.\\
\end{tabular}

Then $n$ has one of the following forms:
\[n = m \hspace{5mm}\text{or}  \hspace{5mm}n = mr  \hspace{5mm}\text{or}  \hspace{5mm}n=mrp^e.\]
\end{thm}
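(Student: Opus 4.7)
The plan has three parts: (i) reduce to the case where $\widetilde P$ is fixed by $\widetilde\phi$; (ii) run a power-series computation in a local coordinate around $P$; and (iii) iterate the resulting congruence to pin down the period.

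For (i), good reduction commutes with iteration, so $\widetilde\phi^n(\widetilde P) = \widetilde P$, forcing $m \mid n$. Write $n = m\ell$; it suffices to prove $\ell \in \{1,r\} \cup \{rp^e : e \geq 1\}$. Replace $\phi$ by $\psi := \phi^m$: now $\widetilde\psi$ fixes $\widetilde P$ with multiplier $\lambda$, and $P$ has exact period $\ell$ under $\psi$. A conjugation in $\PGL_2(R)$ (which preserves good reduction, and is available because any $K$-point of $\PP^1$ has an $R$-integral representative) moves $P$ to $0 \in \PP^1$, so $\widetilde P = 0$, and we expand
\[
\psi(z) = c_0 + c_1 z + c_2 z^2 + \cdots \qquad (c_i \in R),
\]
with $c_0 \in \pp$ (since $\widetilde\psi(0) = 0$) and $\widetilde{c_1} = \lambda$.

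For (ii), assume $\ell \geq 2$ (else $\psi(0) = 0$, so $c_0 = 0$ and the claim is trivial), and set $v := v_\pp(c_0) \geq 1$. For $z \in \pp^v$, the terms $c_k z^k$ with $k \geq 2$ lie in $\pp^{2v}$, giving $\psi(z) \equiv c_0 + c_1 z \pmod{\pp^{2v}}$. A straightforward induction on $s$ then yields $\psi^s(0) \in \pp^v$ and
\[
\psi^s(0) \equiv c_0 \bigl(1 + c_1 + c_1^2 + \cdots + c_1^{s-1}\bigr) \pmod{\pp^{2v}}.
\]
Specializing to $s = \ell$, the equality $\psi^\ell(0) = 0$ combined with $v_\pp(c_0) = v$ gives $1 + c_1 + \cdots + c_1^{\ell-1} \equiv 0 \pmod{\pp^v}$, and reducing modulo $\pp$,
\[
1 + \lambda + \lambda^2 + \cdots + \lambda^{\ell - 1} = 0 \quad \text{in } k.
\]
If $\lambda \neq 1$, this forces $\lambda^\ell = 1$, hence $r \mid \ell$; if $\lambda = 1$, the sum equals $\ell$, forcing $p \mid \ell$ (and trivially $r = 1 \mid \ell$); if $\lambda$ is not a root of unity, no $\ell \geq 2$ can satisfy the identity, so $\ell = 1$ and $n = m$.

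For (iii), suppose $\ell > r$. Apply (ii) to $\psi^r$: good reduction persists, $\widetilde P$ is still fixed, the multiplier is now $\lambda^r = 1$, and $P$ has exact period $\ell/r$ under $\psi^r$. The $\lambda = 1$ case of (ii) then forces $p \mid \ell/r$. Iterating with $\psi^{rp}, \psi^{rp^2}, \ldots$ produces a strictly decreasing sequence of positive integers $\ell/r > \ell/(rp) > \cdots$, each divisible by $p$ until it reaches $1$; hence $\ell/r = p^e$ for some $e \geq 1$, and combined with the cases $\ell = 1$ and $\ell = r$ we obtain $n \in \{m,\, mr,\, mrp^e\}$. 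The main obstacle is the power-series bookkeeping in step (ii), together with checking that the coordinate move lies in $\PGL_2(R)$; once these are in place, step (iii) is a clean iteration of the same congruence argument.
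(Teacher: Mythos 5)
The paper gives no proof of this statement — it is quoted directly from \cite[Theorem 2.21]{ads} — and your argument is correct and essentially reproduces the proof in that cited source: establish $m \mid n$, replace $\phi$ by $\psi=\phi^m$, move $P$ to $0$ by a $\PGL_2(R)$ conjugation, derive the geometric-series congruence $1+\lambda+\cdots+\lambda^{\ell-1}=0$ in $k$ from the $R$-integral expansion, and then iterate the same congruence for $\psi^{r},\psi^{rp},\ldots$ to extract the $p$-power factor. The only step you assert without justification is that $\psi$ has an expansion $c_0+c_1z+\cdots$ with $c_i\in R$ valid on the residue disk of $0$; this is precisely where good reduction enters (the unit resultant together with $\widetilde{\psi}(\widetilde{0})=\widetilde{0}$ forces the denominator to be a unit at $0$), and it deserves a line.
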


Let $\phi(z) \in \QQ(z)$ have critical points $\gamma_1, \gamma_2$. To apply Theroem~\ref{periods},   we consider $\phi$ to be defined over $\QQ_p$ for $p$ a prime of good reduction and with $\gamma_i \in \QQ_p$. 
 If $\phi$ is PCF, then some iterate $\phi^j(\gamma_i)$ has exact period $n$.  
Since the $\FF_p$-orbit $\OO_{\tilde\phi}(\tilde \gamma_i)$ is necessarily finite,  some iterate $\tilde\phi^k(\tilde \gamma_i)$  has exact period $m$.  Theorem~\ref{periods} gives  a set of possible   $n$ values based on the more-easily computed~$m$.

Because we focus on rational functions defined over $\QQ$, we can use~\cite[Theorem 2.28]{ads} to refine the statement above.  We conclude that $e=0$ when $p \neq 2$, and $e\in\{0,1\}$ when $p=2$. In the algorithm, we exclude $p=2$ from consideration, so we have  
$n = m$ or   $n = mr$.

\section{Algorithm}\label{sec:algorithm}
We first describe the overall flow of the algorithm for determining if a map with trivial stabilizer is potentially PCF.  We then provide more detailed pseudocode and explanation for each piece of the algorithm.  Our actual code is provided in the arXiv distribution of this article.
\begin{enumerate}
\item
Build  a database containing, for each $p$ in a list of primes, all quadratic rational maps mod~$p$ in the form given in~\eqref{eqn: normal form}, along with their critical points and corresponding possible global periods as given by Theorem~\ref{periods}.

\item
For by  $\sigma_1$ and $\sigma_2$ values within the height bounds in Proposition~\ref{derived height bound}, create $\phi(z) \in \QQ(z)$ as in~\eqref{eqn: normal form},  reduce the map modulo primes $p$, and iteratively intersect the possible global periods (found in the database) for the critical points at each prime.  If the intersection becomes empty, the map is not PCF.

\item
For maps returned by the algorithm as possibly PCF, calculate the forward orbits of both critical points to verify that they are, indeed, finite.

\end{enumerate}

Algorithm~\ref{build_db} builds the databases; our  implementation used the first 130 odd primes.    Note that a quadratic rational map over $\FF_p$ for $p$ an odd prime has exactly two critical points~\cite[Corollary 1.3]{unicrit}, which are the roots of the wronskian (together with the point at infinity in the case the wronskian is linear).  If the wronskian is an irreducible quadratic over $\FF_p$, the map is not included in the database.

\begin{algorithm}[h] 
\begin{flushleft}
\smallskip

Input: {\var pList}, a list of odd primes  

\smallskip

Output: a database of quadratic rational maps over $\FF_p$ together with critical point data for each prime $p$ in {\var pList}
\bigskip

for $p$ in {\var pList}:\\

\medskip

\quad for each pair $(b,c) \in \FF_p^2$:\\
\quad\quad create the morphism $f := [2x^2 + bxy + by^2, -x^2 + (4-b)xy + cy^2]$\\

\medskip

\quad\quad if  $\deg(f)=2$  and $c_1, c_2 :=$  the critical points of $f$ are defined over $\FF_p$:\\
\quad\quad\quad add an entry for $f$ to the database\\

\medskip

\quad\quad\quad for $i = 1, 2$:\\
\quad\quad\quad\quad find  $m_i$, the length of the cycle into which $c_i$'s orbit eventually falls\\
\quad\quad\quad\quad find $\lambda_i$,  the multiplier of that cycle\\

\medskip

\quad\quad\quad\quad if $\lambda_i = 0$:\\
\quad\quad\quad\quad\quad append the pair $(c_i, \{m_i\})$  to the database entry for $f$ \\
\quad\quad\quad\quad else:\\
\quad\quad\quad\quad\quad find $r_i := $ the multiplicative order of $\lambda_i$ in $\FF_p^{\times}$\\
\quad\quad\quad\quad\quad append the pair $(c_i, \{m_i, m_ir_i\})$ to the database entry for $f$ \\

\end{flushleft}
\caption{--- \textbf{Build Database}}
\label{build_db}
\end{algorithm}

Algorithm~\ref{outer}  filters out functions which are certainly not PCF, but does not guarantee that the functions which remain are PCF. The algorithm identified ten potentially PCF quadratic rational functions, all of which were verified to be PCF by  iterating the function at each critical point.  These maps are described in Section~\ref{sec:PCFnoSym}.  The algorithm uses the resultant of the rational map $\phi$,  meaning the resultant of the relatively prime polynomials $f$ and $g$ such that $\phi =f/g$.  The resultant of a map given in the form~\eqref{eqn: normal form} is nonzero if and only if the map has trivial stabilizer~\cite[Remark~3.2]{MMYY}, and primes dividing the resultant are precisely the primes of bad reduction for $\phi$~\cite[Sections~2.4 and 2.5]{ads}.

\begin{algorithm}[h] 
\begin{flushleft}
\smallskip
Input:  {\var pList}, a list of primes included in the database;  height bounds $H_1$ and $H_2$\\

\smallskip
Output: a set of possibly PCF maps with $\sigma_1 \leq H_1$ and $\sigma_2 \leq H_2$\\

\bigskip

for $\sigma_1 \in \QQ$ of height $\leq 192$ and $\sigma_2\in \QQ$ of height $\leq 12288$:  \\
	\quad create the rational map 
	$\phi(z) := \frac{2z^2 + (2-\sigma_1)z + (2-\sigma_1)}{-z^2 + (2+\sigma_1)z + (2-\sigma_1-\sigma_2)}$\\	
	 \quad normalize the coordinates of $\phi$ (clearing denominators so  all coefficients are in $\ZZ$)\\
	 \quad calculate ${\var res}:= $ resultant of $\phi$\\
	 
	 \medskip
	 
	 \quad if ${\var res} \neq 0$:\\
	 \quad\quad calculate $\gamma_1, \gamma_2 :=$ critical points of $\phi$\\
	
	\medskip

	\quad \quad if $\gamma_1, \gamma_2 \in \QQ$:\\
	\quad \quad\quad if {\var Check\_Rational\_Periods}($\phi$, $\gamma_1$, $\gamma_2$, 
	{\var pList}, {\var res}):\\
	\quad \quad\quad\quad add $\phi$ to set of possibly PCF maps\\
	\quad \quad else:\\
	\quad \quad\quad if {\var Check\_Irrational\_Periods}($\phi$,  
	{\var pList}, {\var res}):\\
	\quad \quad\quad\quad add $\phi$ to set of possibly PCF maps\\

\end{flushleft}
\caption{--- Find PCF maps up to height bound}
\label{outer}
\end{algorithm}

 Algorithm~\ref{alg:CheckRat} is called when $\phi$ has rational   critical points $\gamma_1$ and $\gamma_2$, meaning we can easily reduce them modulo primes $p$.  (If $p$ divides the denominator, the point reduces to the point at infinity on $\PP^1_{\FF_p}$.)  We must keep track of the possible periods for each critical point independently, since it is possible that they terminate in cycles of different lengths.  
 
 If  $\gamma_1$ and $\gamma_2$  are irrational, they must be Galois conjugates since the equation defining them is a quadratic polynomial in $\QQ[z]$.  Since $\phi(z) \in \QQ(z)$,  the same is true of $\phi^i(\gamma_1)$ and $\phi^i(\gamma_2)$ for every $i \geq 0$.  Therefore, the orbits of $\gamma_1$ and $\gamma_2$ are either both finite or both infinite, and if they are finite they will terminate in cycles of the same length.  Algorithm~\ref{alg:checkIrr} takes advantage of this symmetry.  Note that we do not  reduce irrational critical points modulo primes; we simply check if the reduced function appears in the database or not.

\begin{algorithm}[h] 
\begin{flushleft}
\smallskip

Input: a quadratic rational map $\phi$ with integer coefficients, resultant {\var res}, and rational critical points $\gamma_1, \gamma_2$.  A list of primes {\var pList} for which the database has been built.\\

\smallskip

Output: {\var False} if $\phi$ is definitely not PCF and {\var True} otherwise\\

\bigskip

initialize empty lists ${\var PossPer}_1$ and ${\var PossPer}_2$\\

\medskip

for $p$ in {\var pList}:\\
	\quad if $p \nmid {\var res}$ ($p$ is a prime of good reduction):\\
	\quad \quad set $f:= \phi \pmod p$,  \quad $c_1:= \gamma_1 \pmod p$, \quad $c_2:= \gamma_2 \pmod p$\\
	\quad \quad look up $f$ in the database\\
	
	\medskip
	
	\quad \quad for $i = 1,2$:\\
	\quad \quad\quad retrieve the  set of possible global periods for $c_i$  \\
	\quad \quad\quad if ${\var PossPer}_i$ is empty (this is the first good prime):  \\
	\quad \quad\quad\quad set ${\var PossPer}_i =  \{ \text{possible global periods for } c_i \}$ \\
	\quad \quad\quad else:  \\
	\quad \quad\quad\quad set ${\var PossPer}_i = {\var PossPer}_i \cap \{ \text{possible global periods for } c_i\}$ \\	
	
	\medskip
	
	\quad \quad\quad if ${\var PossPer}_i$ is empty: \\
	\quad \quad\quad\quad return {\var False}\\
	
	\medskip

return {\var True}\\

\end{flushleft}
\caption{--- \textbf{Check\_Rational\_Periods} filters out maps $\phi$ which are not PCF}
\label{alg:CheckRat}
\end{algorithm}

\begin{algorithm}[h]
\begin{flushleft}
\smallskip

Input: a quadratic rational map $\phi$ which has irrational critical points and  integer coefficients.  The resultant of $\phi$ {\var res}.  A list of primes {\var pList} for which the database has been built.

\smallskip

Output: {\var False} if $\phi$ is definitely not PCF and {\var True} otherwise\\

\bigskip

initialize empty list {\var PossPer} \\

\medskip

for $p$ in {\var pList}:\\
	\quad if $p \nmid {\var res}$ ($p$ is a prime of good reduction):\\
	\quad \quad set $f:= \phi \pmod p$\\
	\quad \quad look up $f$ in the database\\
	\quad \quad let ${\var Poss}_1$ and ${\var Poss}_2$ be the sets of 
	possible global periods for the critical points of $f$\\

	\medskip
	
	\quad \quad if  {\var PossPer} is empty (this is the first good prime):  \\
	\quad \quad\quad set ${\var PossPer} =  {\var Poss}_1 \cap {\var Poss}_2$ \\
	\quad \quad else:  \\
	\quad \quad\quad set ${\var PossPer} = {\var PossPer} \cap {\var Poss}_1\cap {\var Poss}_2$ \\
	
	\medskip
	
	\quad \quad if {\var PossPer} is empty:\\
	\quad \quad\quad return {\var False}\\
	
	\medskip

return {\var True}\\

\end{flushleft}
\caption{--- \textbf{Check\_Irrational\_Periods} filters out maps $\phi$ which are not PCF}
\label{alg:checkIrr}
\end{algorithm}

 Algorithm~\ref{build_db} was implemented in Sage~\cite{sage}, using built-in functionality for morphisms on projective spaces over finite fields.   The database used throughout was GNU dbm~\cite{gdbm}.   Algorithms~\ref{outer}, \ref{alg:CheckRat}, and~\ref{alg:checkIrr} were prototyped in Sage and eventually implemented in C to improve speed of computation.  They  use the GNU Multiple Precision Arithmetic Library~\cite{GMP}.  The program was run on two 6-core Intel\textregistered\  Xeon\textregistered\ CPUs at 2.80GHz, with 12 GB of RAM   and running Linux (CentOS 5.10).

\section{PCF maps with trivial $\PGL_2$ stabilizer}\label{sec:PCFnoSym}
Table \ref{tab: trivial stab} lists the output of our algorithm:  all quadratic PCF maps defined over $\QQ$ with trivial $\PGL_2$ stabilizer.    In the critical portraits, an arrow from $P$ to $Q$ indicates that $\phi(P)=Q$; an integer over the arrow indicates the ramification index of the map at that point.  In particular, the critical points are the initial points for arrows where the integer is~$2$.    The final column gives a conjugate map in simpler form, reflecting the statement in Theorem~\ref{all pcf}.

\begin{center}
\begin{table}
\caption{All quadratic PCF maps defined over $\QQ$ with trivial $\PGL_2$ stabilizer}
\begin{tabular}{|c|c|c|}\hline

$\phi(z)$						& Critical portrait	& Conjugate map  \\ 
\hline \hline 

$\displaystyle\frac{2z^2}{-z^2+4z+8}$			& \xygraph{ 
		!{<0cm,0cm>;<1cm,0cm>:<0cm,1cm>::} 
		!{(-2,0) }*+{\bullet_{0}}="a1" 
		!{(0,0) }*+{\bullet_{-4}}="a" 
		!{(1.75,0) }*+{\bullet_{-4/3}}="b" 
		!{(3.5,0) }*+{\bullet_{4}}="c" 
		%!{(0,1.5) }*+{\ }="x"
		"a1" :@(r,lu)_2 "a1"
		"a":"b"^2 
		"b":"c" ^1
		"c" :@(r,lu)_1 "c"
	} & $z^2-2$\\ 
\hline

$\displaystyle\frac{2z^2}{-z^2+4z+4}$		 & \xygraph{ 
		!{<0cm,0cm>;<2cm,0cm>:<0cm,1cm>::} 
		!{(0,0) }*+{\bullet_{0}}="a" 
		!{(1,0) }*+{\bullet_{-2}}="b" 
		!{(2,0) }*+{\bullet_{-1}}="c" 
		"a":@(r,lu)_2 "a"
       "b":@/_/_2 "c"
       "c":@/_/_1 "b"
	} & $z^2-1$\\ 
 \hline

$\displaystyle\frac{2z^2+8z+8}{-z^2-4z+4}$	 	&\xygraph{ 
		!{<0cm,0cm>;<1.5cm,0cm>:<0cm,1cm>::} 
		!{(0,0) }*+{\bullet_{\infty}}="a" 
		!{(1,0) }*+{\bullet_{-2}}="b" 
		!{(2,0) }*+{\bullet_{0}}="c" 
		!{(3,0) }*+{\bullet_{2}}="d" 
		!{(4,0) }*+{\bullet_{-4}}="e" 
		"a":^2"b" 
		"b":^2"c" 
		"c":^1"d"
		"d":@/_/_1"e" 
		"e":@/_/_1"d" 
	}& $\displaystyle\frac{1}{2(z-1)^2}$ \\ 
\hline

$\displaystyle\frac{2z^2+8z+8}{-z^2-4z}$	 	&  \xygraph{ 
		!{<0cm,0cm>;<2cm,0cm>:<0cm,1cm>::} 
		!{(-0.6,-.25) }*+{\bullet_{-2}}="a" 
		!{(0.6,-.25) }*+{\bullet_{0}}="b" 
		!{(0,.75) }*+{\bullet_{\infty}}="c" 
		"a":@/_/_2"b"
    		"b":@/_/_1"c"
      		"c":@/_/_2"a"
	} &  $\displaystyle\frac{1}{(z-1)^2}$\\ 
 \hline

$\displaystyle\frac{2z^2+4z+4}{-z^2}$	 		& \xygraph{ 
		!{<0cm,0cm>;<2cm,0cm>:<0cm,1cm>::} 
		!{(0,0) }*+{\bullet_{0}}="a" 
		!{(1,0) }*+{\bullet_{\infty}}="b" 
		!{(2,0) }*+{\bullet_{-2}}="c" 
		!{(3,0) }*+{\bullet_{-1}}="d" 
		"a":^2"b" 
		"b":^1"c" 
		"c":@/_/_2"d" 
		"d":@/_/_1"c" 
	} &  $\displaystyle\frac{-1}{4z^2-4z}$ \\ 
\hline

$\displaystyle\frac{6z^2 + 8z + 8 }{ -3z^2 + 4z + 4}$			& \xygraph{ 
		!{<0cm,0cm>;<1.5cm,0cm>:<0cm,1cm>::} 
		!{(0,0) }*+{\bullet_{0}}="a" 
		!{(1,0) }*+{\bullet_{2}}="b" 
		!{(2,0) }*+{\bullet_{\infty}}="c" 
		!{(3,0) }*+{\bullet_{-2}}="d" 
		!{(4,0) }*+{\bullet_{-1}}="e" 
		"a":^2"b" 
		"b":^1"c" 
		"c":^1"d"
		"d":@/_/_2"e" 
		"e":@/_/_1"d" 
	}& $\displaystyle\frac{-4}{9z^2-12z}$\\  
\hline

$\displaystyle\frac{2z^2 + 8z + 8}{ -z^2 - 4z - 2}$			& \xygraph{ 
		!{<0cm,0cm>;<1.5cm,0cm>:<0cm,1cm>::} 
		!{(0,0) }*+{\bullet_{\infty}}="a1" 
		!{(1,0) }*+{\bullet_{-2}}="a" 
		!{(2,0) }*+{\bullet_{0}}="b" 
		!{(3,0) }*+{\bullet_{-4}}="c" 
		"a1":^2"a"
		"a":^2"b" 
		"b":^1"c" 
		"c" :@(r,lu)_1 "c"
	}& $\displaystyle\frac{2}{(z-1)^2}$\\ 
\hline

$\displaystyle\frac{2z^2 + 4z + 4 }{ -z^2 +  4}$			& \xygraph{ 
		!{<0cm,0cm>;<1.5cm,0cm>:<0cm,1cm>::} 
		!{(-0.75,1) }*+{\bullet^{-3-\sqrt{5}}}="a1" 
		!{(-0.75,-1) }*+{\bullet_{-3+\sqrt{5}}}="b1" 
		!{(.75,0.5) }*+{\bullet_{-\frac{1}{2}\left(1+\sqrt{5}\right)}}="a2" 
		!{(.75,-0.5) }*+{\bullet_{\frac{1}{2}\left(-1+\sqrt{5}\right)}}="b2" 
		!{(2.25,0) }*+{\bullet_{2}}="c" 
		!{(3,0) }*+{\bullet_{\infty}}="d" 
		!{(4,0) }*+{\bullet_{-2}}="e" 
		"a1":^2"a2" 
		"b1":_2"b2" 
		"a2":^1"c"
		"b2":_1"c"
		"c":^1"d"
		"d":@/_/_1"e" 
		"e":@/_/_1"d" 
	}& $\displaystyle\frac{2 z+1}{4 z-2 z^2}$\\ 
\hline

$\displaystyle\frac{2z^2 + 4z + 4 }{ -z^2 +2}$			& \xygraph{ 
		!{<0cm,0cm>;<1.5cm,0cm>:<0cm,1cm>::} 
		!{(-0.75,1) }*+{\bullet_{-2-\sqrt{2}}}="a1" 
		!{(-0.75,-1) }*+{\bullet_{-2+\sqrt{2}}}="b1" 
		!{(.75,0.5) }*+{\bullet_{-\sqrt{2}}}="a2" 
		!{(.75,-0.5) }*+{\bullet_{\sqrt{2}}}="b2" 
		!{(2.25,0) }*+{\bullet_{\infty}}="c" 
		!{(3.25,0) }*+{\bullet_{-2}}="d" 
		"a1":^2"a2" 
		"b1":_2"b2" 
		"a2":^1"c"
		"b2":_1"c"
		"c":^1"d"
		"d":@(r,lu)_1 "d"
	}& $\displaystyle\frac{-2 z}{2 z^2-4 z+1}$\\ 
\hline

$\displaystyle\frac{6z^2 + 16z + 16}{-3z^2 - 4z - 4}$	 		& \xygraph{ 
		!{<0cm,0cm>;<2cm,0cm>:<0cm,1cm>::} 
		!{(0,0.5) }*+{\bullet_{0}}="a" 
		!{(1,0.5) }*+{\bullet_{-4}}="b" 
		!{(2,0.5) }*+{\bullet_{-4/3}}="c" 
		!{(0.5,-0.5) }*+{\bullet_{-2}}="d" 
		!{(1.5,-0.5) }*+{\bullet_{-1}}="e" 
		"a":^2"b" 
		"b":^1"c" 
		"c":@(r,lu)_1 "c"
		"d":@/_/_2"e" 
		"e":@/_/_1"d" 
	} &$\displaystyle\frac{3 z^2-4 z+1}{1-4 z}$ \\ 
\hline
\label{tab: trivial stab}
\end{tabular}
\end{table}
\end{center}

\begin{remark*}
This list of PCF maps raises some  questions.
\begin{enumerate}
\item
All maps except the sixth one and the last one  satisfy $\sigma_1 \in \left\{\pm 2, -6\right\}$.  (This is also true of the maps with nontrivial stabilizer described in Section~\ref{sec:symmlocus}.)  The line $\sigma_1 = 2$ in the moduli space of quadratic rational maps corresponds to the quadratic polynomials.  What (if anything) is special about these other two lines?
\item
Similarly, all maps except the sixth one and the last one correspond to integer values of $(\sigma_1, \sigma_2)$.  What is special about these the two anomalous maps?
\item
For the two anomalous maps we have $(\sigma_1, \sigma_2) = (-\frac 2 3, \frac 4 3)$ and  $(\sigma_1, \sigma_2) = (-\frac{10}3, \frac{20}3)$.  In other words, the symmetric functions of the multipliers have denominator at most~$3$ for all quadratic PCF maps defined over $\QQ$.  Is there some general phenomenon here that extends to maps defined over number fields?
\end{enumerate}
\end{remark*}

From~\cite[Proposition 4.73]{ads}, functions with trivial $\PGL_2$ stabilizer have no nontrivial twists.  That is, any quadratic PCF map defined over $\QQ$ with trivial stabilizer must be conjugate to one of the ten maps above, and the conjugacy must also be defined over $\QQ$.  Hence the rational preperiodic structures for these maps are invariant within the conjugacy class.    The possible structures, computed with Sage~\cite{sage}, appear in Table~\ref{tab: trivial preper}.

\begin{center}
\begin{table}
\caption{Preperiodic structures for quadratic maps with trivial stabilizer}

\begin{tabular}{|c|c|}\hline
$\phi(z)$		& Rational Preperiodic Points Graph				 \\ 
\hline \hline 

$z^2-2$			& \xygraph{ 
		!{<0cm,0cm>;<2cm,0cm>:<0cm,1cm>::} 
		!{(0,0) }*+{\bullet_{\infty}}="a" 
		!{(1,0) }*+{\bullet_{1}}="b" 
		!{(2,0) }*+{\bullet_{-1}}="c" 
		!{(0.5,-1) }*+{\bullet_{0}}="x" 
		!{(1.5,-1) }*+{\bullet_{-2}}="y" 
		!{(2.5,-1) }*+{\bullet_{2}}="z" 
		"a":@(l,ru) "a"
		"b":"c"
		"c":@(r,lu) "c"
		"x":"y"
		"y":"z"
		"z":@(r,lu) "z"
	}\\

\hline

$z^2-1$			& \xygraph{ 
		!{<0cm,0cm>;<2cm,0cm>:<0cm,1cm>::} 
		!{(0,0) }*+{\bullet_{\infty}}="a" 
		!{(1,0) }*+{\bullet_{1}}="b" 
		!{(2,0) }*+{\bullet_{0}}="c" 
		!{(3,0) }*+{\bullet_{-1}}="d" 
		"a":@(l,ru) "a"
		"b":"c"
		"d":@/_/"c"
		"c":@/_/"d"
	}\\

 \hline

$\displaystyle\frac{1}{2(z-1)^2}$	&\xygraph{ 
		!{<0cm,0cm>;<1.5cm,0cm>:<0cm,1cm>::} 
		!{(0,0) }*+{\bullet_{1}}="a" 
		!{(1,0) }*+{\bullet_{\infty}}="b" 
		!{(2,0) }*+{\bullet_{0}}="c" 
		!{(3,0) }*+{\bullet_{1/2}}="d" 
		!{(4,0) }*+{\bullet_{2}}="e" 
		!{(5,0) }*+{\bullet_{3/2}}="f"
		"a":"b" 
		"b":"c" 
		"c":"d"
		"f":"e"
		"d":@/_/"e" 
		"e":@/_/"d" 
	} \\
\hline

$\displaystyle\frac{1}{(z-1)^2}$	&  \xygraph{ 
		!{<0cm,0cm>;<2cm,0cm>:<0cm,1cm>::} 
		!{(0.7,0.5) }*+{\bullet_{\infty}}="a" 
		!{(0.7,-0.5) }*+{\bullet_{0}}="b" 
		!{(0,0) }*+{\bullet_{1}}="c" 
		!{(-.75,0) }*+{\bullet_{2}}="d"
		"a":@/^/"b"
    		"b":@/^/"c"
      		"c":@/^/"a"
		"d":"c"
	} \\
 \hline

$\displaystyle\frac{-1}{4z^2-4z}$	&  \xygraph{ 
		!{<0cm,0cm>;<2cm,0cm>:<0cm,1cm>::} 
		!{(0,0) }*+{\bullet_{1/2}}="a" 
		!{(1,0) }*+{\bullet_{1}}="b" 
		!{(2,0) }*+{\bullet_{\infty}}="c" 
		!{(3,0) }*+{\bullet_{0}}="d" 
		"a":"b" 
		"b":"c" 
		"c":@/_/"d" 
		"d":@/_/"c" 
	}  \\
\hline

$\displaystyle\frac{-4}{9z^2-12z}$	&\xygraph{ 
		!{<0cm,0cm>;<1.5cm,0cm>:<0cm,1cm>::} 
		!{(0,-0.25) }*+{\bullet_{2/3}}="a" 
		!{(1,-0.25) }*+{\bullet_{1}}="b" 
		!{(2,-0.25) }*+{\bullet_{4/3}}="c" 
		!{(3,-0.25) }*+{\bullet_{\infty}}="d" 
		!{(4,-0.25) }*+{\bullet_{0}}="e" 
		!{(2,0.75) }*+{\bullet_{1/3}}="f"
		!{(2,1.1) }*+{\ }="x"
		"a":"b" 
		"b":"c" 
		"c":"d"
		"f":"c"
		"d":@/_/"e" 
		"e":@/_/"d" 
	} \\
\hline

$\displaystyle\frac{2}{ (z-1)^2}$			& \xygraph{ 
		!{<0cm,0cm>;<2cm,0cm>:<0cm,1cm>::} 
		!{(0,0) }*+{\bullet_{1}}="a1" 
		!{(1,0) }*+{\bullet_{\infty}}="a" 
		!{(2,0) }*+{\bullet_{0}}="b" 
		!{(3,0) }*+{\bullet_{2}}="c" 
		"a1":"a"
		"a":"b" 
		"b":"c"
		"c" :@(r,lu) "c"
	}\\ 
\hline

$\displaystyle\frac{2z+1 }{ 4z - 2z^2}$	&\xygraph{ 
		!{<0cm,0cm>;<1.5cm,0cm>:<0cm,1cm>::} 
		!{(0,0) }*+{\bullet_{-1/2}}="c" 
		!{(1,0) }*+{\bullet_{0}}="d" 
		!{(2,0) }*+{\bullet_{\infty}}="e" 
		!{(3,0) }*+{\bullet_{2}}="f"
		"c":"d"
		"f":"e"
		"d":@/_/"e" 
		"e":@/_/"d" 
	} \\
\hline

$\displaystyle\frac{-2z}{ 2z^2 -4z+1}$			& \xygraph{ 
		!{<0cm,0cm>;<2cm,0cm>:<0cm,1cm>::} 
		!{(0,0) }*+{\bullet_{\infty}}="a" 
		!{(1,0) }*+{\bullet_{0}}="b" 
		"a":"b" 
		"b" :@(r,lu) "b"
	}\\ 
\hline

$\displaystyle\frac{3 z^2-4 z+1}{1-4 z}$	 		& \xygraph{ 
		!{<0cm,0cm>;<2cm,0cm>:<0cm,1cm>::} 
		!{(0,0.5) }*+{\bullet_{1/2}}="a" 
		!{(1,0.5) }*+{\bullet_{1/4}}="b" 
		!{(2,0.5) }*+{\bullet_{\infty}}="c" 
		!{(1,-0.5) }*+{\bullet_{-2}}="d" 
		!{(2,-0.5) }*+{\bullet_{-1}}="e" 
		!{(0,-0.5) }*+{\bullet_{1/3}}="f" 
		"a":"b" 
		"b":"c" 
		"c":@(r,lu) "c"
		"d":@/_/"e" 
		"a":"b" 
		"e":@/_/"d" 
		"f":"d" 
	} \\ 
\hline

\end{tabular}
\label{tab: trivial preper}

\end{table}
\end{center}

\section{PCF maps with nontrivial $\PGL_2$ stabilizer}\label{sec:symmlocus}
Quadratic rational maps with nontrivial $\PGL_2$ stabilizer  have been extensively studied.  In~\cite{milnrat}, Milnor described the symmetry locus for quadratic rational maps;  Manes investigated the arithmetic of these maps in~\cite{manespreper,manesmod}.  
  Jones and Manes found a height bound on PCF maps with nontrivial stabilizer and used that bound to show that over $\QQ$, the only maps meeting these criteria must be conjugate to either $\psi_1(z) = z^2$ or $\psi_2(z)= 1/z^2$~\cite[Proposition 5.1]{galois}.
 
 Unlike the six maps described in Section~\ref{sec:PCFnoSym}, these two maps have nontrivial twists.  That is, there are infinitely many $\PGL_2(\QQ)$ conjugacy classes within each of these two $\PGL_2(\bar \QQ)$ conjugacy classes of maps.  The different $\QQ$ conjugacy classes may have very different structures for their rational preperiodic points.  In this section, we find all of the possible rational preperiodic structures for these two conjugacy classes. 
 
  Throughout this section, $\zeta_n$ represents a primitive $n^{\text{th}}$ root of unity.

\begin{define}
If a point $\alpha \in \PP^1$ enters a cycle of least period $m$ after $n$ iterations  (i.e. if $\phi^n(\alpha)$ has period $m$ with $n$ and $m$ minimal), then $\alpha$ is called a periodic point of type $m_n$. 
\end{define}

\subsection{Maps conjugate to $\psi_1(z) = z^2$}
  Twists of $\psi_1$ are described completely in~\cite{manespreper}.  They  are given by
\[
\phi_b(z) = \frac{z}{2} + \frac{b}{z},
\]
where $b\neq 0$ is defined up to squares in $\QQ$.
Applying propositions  from~\cite{manespreper}, we  easily conclude:
\begin{enumerate}
\item
The map $\phi_b$ always has a rational fixed point at infinity and a rational point of type $1_1$ at $0$ (Propositions 1 and 5).  
\item
The map $\phi_b$ has finite rational  fixed points if and only if $b = \frac{c^2}{2}$ for $c\in\QQ^{\times}$ (Proposition 1), and all such maps are conjugate over $\QQ$.  Taking $b= \frac 12$ yields
\[
\phi_{1/2}(z) = \frac{z^2+1}{2z}.
\]
In this case, there are no additional points of type $1_1$ (Proposition 5).
\item
The map $\phi_b$ has rational points of primitive period 2 if and only if $b = -\frac {3c^2}2$ for $c\in\QQ^{\times}$ (Proposition 2), and all such maps are conjugate over $\QQ$.  So we take $b = -\frac 32$ to get 
\[
\phi_{-3/2}(z) = \frac{z^2-3}{2z}.
\]
In this case, we  have two rational points of type $2_1$ (Proposition 5) but no rational points of type $2_n$ for $n>1$ (Proposition 8) and no finite rational fixed points (Proposition 9).

\item
The map $\phi_b$ cannot have rational points of primitive period 3 or 4 (Theorems 3 and~4).  This will also follow Theorem~\ref{thm:z^2 periodic} below.

\item
The map $\phi_b$ has rational points of type $1_2$ if and only if $b = -\frac{c^2}2$ for $c\in\QQ^{\times}$, and all such maps are conjugate over $\QQ$.  So we take $b = -\frac 12$ to get the map
\[
\phi_{-1/2}(z) = \frac{z^2-1}{2z}.
\]
In this case, there are no finite rational fixed points (Proposition 6) and no rational points of period 2 (Proposition 9).

\item
The map $\phi_b$ cannot have rational points of type $1_n$ for $n\geq 3$ (Propositions 7 and 8).
\end{enumerate}

The description above yields four possible  rational preperiodic structures, shown in Table~\ref{table: twists 1}.

\begin{center}
\begin{table}
\caption{Preperiodic structures for twists of $\psi_1(z) = z^2$}

\begin{tabular}{|c|c|}
\hline
 $\displaystyle \phi_b(z) = \frac{z}{2} + \frac{b}{z}$		& Rational Preperiodic Points Graph \\ 
    \hline \hline 

$\displaystyle \phi_1(z) = \frac{z}{2} + \frac{1}{z}$				&
 \xygraph{
        !{<0cm,0cm>;<1cm,0cm>:<0cm,1cm>::}
        !{(1 ,0) }*+{\bullet_{\infty}}="a"
        !{(-1, 0) }*+{\bullet_{0}}="b"
        "a" :@(r,lu) "a"
        "b":"a"
	} \\
\hline

$\displaystyle \phi_{1/2}(z) = \frac{z}{2} + \frac{1}{2z}$				&
  \xygraph{
        !{<0cm,0cm>;<1cm,0cm>:<0cm,1cm>::}
         !{(-1 ,0) }*+{\bullet_{\infty}}="a"
        !{(-2, 0) }*+{\bullet_{0}}="b"
        !{(1,0) }*+{\bullet_{1}}="c"
         !{(2.5,0) }*+{\bullet_{-1}}="d"
       "a" :@(r,lu) "a"
        "b":"a"
        "c" :@(r,lu) "c"
        "d" :@(r,lu) "d"
    }\\
\hline

$\displaystyle \phi_{-3/2}(z) = \frac{z}{2} - \frac{3}{2z}$				&
\xygraph{
        !{<0cm,0cm>;<2cm,0cm>:<0cm,1cm>::}
        !{(-0.5 ,0) }*+{\bullet_{\infty}}="a"
        !{(-1.5, 0) }*+{\bullet_{0}}="b"
         !{(.5,-0.5) }*+{\bullet_{-2}}="c"
          !{(1,0.5) }*+{\bullet_{-1}}="d"
               !{(2.5,-0.5) }*+{\bullet_{2}}="e"
               !{(2,0.5)}*+{\bullet_{1}}="f"
        "a" :@(r,lu) "a"
        "b":"a"
         "c":"d"
               "e":"f"
       "d":@/_/"f"
       "f":@/_/"d"
    }\\ 
 \hline

$\displaystyle \phi_{-1/2}(z) = \frac{z}{2} - \frac{1}{2z}$				&
\xygraph{
        !{<0cm,0cm>;<1cm,0cm>:<0cm,1cm>::}
        !{(1 ,0) }*+{\bullet_{\infty}}="a"
        !{(-1, 0) }*+{\bullet_{0}}="b"
        !{(-2.3, .7) }*+{\bullet_{-1}}="c"
        !{(-2.3,-0.7)}*+{\bullet_{1}}="d"
        "a" :@(r,lu) "a"
        "b":"a"
        "d":"b"
        "c":"b"
    } \\
     \hline

\end{tabular}
\label{table: twists 1}
\end{table}
\end{center}

In order to claim we have a complete description of the possible rational preperiodic structures, we need the following result.

\begin{thm}\label{thm:z^2 periodic}
Let 
\[
\phi_b(z) = \frac{z}{2} + \frac{b}{z}.
\]
Then $\phi$ has no rational point of least period $n > 2$.
\end{thm}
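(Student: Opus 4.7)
The plan is to exploit the fact that $\phi_b$ is a twist of the power map $z \mapsto z^2$ over the quadratic (or trivial) extension $K = \QQ(\sqrt{2b})$, and then reduce the question to one about roots of unity in $K$.

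\textbf{Setting up the conjugation.} The finite fixed points of $\phi_b$ are the roots of $z^2 = 2b$, namely $\pm s$ where $s = \sqrt{2b}$. Setting $f(z) = (z-s)/(z+s)$, a direct computation with $f^{-1}(w) = s(1+w)/(1-w)$ shows that
\[
f \circ \phi_b \circ f^{-1}(w) = w^2.
\]
Since $f$ is defined over $K = \QQ(s)$, a field of degree at most $2$ over $\QQ$, the $\phi_b$-orbit of any point $\alpha \in \PP^1(\QQ)$ is carried to a $z^2$-orbit lying in $\PP^1(K)$.

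\textbf{Reduction to roots of unity.} Suppose $\alpha \in \PP^1(\QQ)$ has least period $n$ under $\phi_b$, and put $\zeta = f(\alpha) \in \PP^1(K)$. Then $\zeta$ has least period $n$ under $w \mapsto w^2$, so $\zeta^{2^n} = \zeta$. The totally ramified cases $\zeta \in \{0, \infty\}$ correspond to $\alpha \in \{s, -s\}$, which are fixed points of $\phi_b$ and hence have $n = 1$. Otherwise $\zeta$ is a $(2^n - 1)$-th root of unity that lies in the quadratic field $K$.

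\textbf{Arithmetic of roots of unity in $K$.} Because $[\QQ(\zeta):\QQ] \leq [K:\QQ] \leq 2$, the order $d$ of $\zeta$ satisfies $\varphi(d) \leq 2$, which forces $d \in \{1,2,3,4,6\}$. On the other hand $d$ divides $2^n-1$, which is odd, so $d \in \{1,3\}$. If $d = 1$ then $\zeta = 1$, giving $\alpha = \infty$ and $n = 1$. If $d = 3$ then $\zeta$ is a primitive cube root of unity; since $2^2 \equiv 1 \pmod 3$, the least period of $\zeta$ under squaring is exactly $2$, so $n = 2$. In either case $n \leq 2$, contradicting $n > 2$.

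The only step requiring care is verifying the conjugation identity and handling the degenerate fibers $\zeta \in \{0, \infty\}$; both are routine. Once the conjugation is in place the argument is essentially a one-line application of the fact that $\QQ$ and its imaginary quadratic extensions contain only finitely many roots of unity, none of them of odd order exceeding $3$.
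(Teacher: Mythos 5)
Your proof is correct and follows essentially the same route as the paper: conjugating $\phi_b$ to $w\mapsto w^2$ over $\QQ(\sqrt{2b})$ (your single M\"obius map $f(z)=(z-s)/(z+s)$ is exactly the composite of the paper's two conjugations) and then ruling out quadratic roots of unity of odd order greater than $3$. Your use of $d\mid 2^n-1$ together with $\varphi(d)\le 2$ is a slightly tidier finish than the paper's explicit list-and-check, but it is the same argument in substance.
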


\begin{proof}
Consider a point $\alpha \in \QQ$ so that $\alpha$ is periodic for $\phi_b(z)$.  Let 
\[
f(z) = \frac{z}{\sqrt{ 2b}}, \quad
\text{ so } \quad  \phi_b^f(z) = \phi_{1/2}(z) = \frac{z^2+1}{2z}.
\]
Then we have $f(\alpha) = \frac{\alpha}{\sqrt{2 b}}$ is periodic for $\phi_{1/2}(z)$.

Now let $g = (z-1)/(z+1)$.  It's a simple matter to check that $\psi_1(z):= \phi_{1/2}^g = z^2$, so that $g(f(\alpha)) \in \QQ\left[\sqrt {2b} \right]$ is periodic for $\psi_1(z)$.

We will now categorize periodic points for $\psi_1(z) = z^2$ that lie in quadratic fields, showing that none of them have period of length more than 2.  The result will follow. 

The map $\psi_1$ has a totally ramified fixed point at $\infty$.  Any finite periodic point of $\psi_1(z) = z^2$ is a root of $z^{2^n}-z$, so it is either $0$ or a root of $z^{2^n-1}-1$, i.e. a root of unity.  Since we seek periodic points that lie in quadratic fields, we can restrict our search to roots of unity that lie in quadratic fields, namely $\{ \pm 1, \pm i,  \zeta_3, \zeta_3^{-1},   \zeta_6, \zeta_6^{-1} \}$.

A computation verifies that  the preperiodic  structures for $\psi_1$containing these points  are the ones shown in Figure~\ref{fig:z^2pts}.  So  the only quadratic periodic points have period $1$ or $2$ as desired.
\end{proof}

\begin{figure}[h]
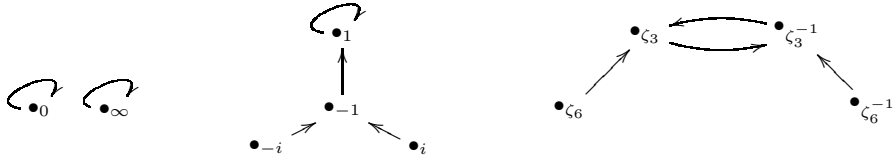

\centering
\mbox{
\subfigure{
    \xygraph{
        !{<0cm,0cm>;<2cm,0cm>:<0cm,1cm>::}
        !{(-1,1) }*+{\bullet_{0}}="j"
        !{(-.5,1) }*+{\bullet_{\infty}}="i"
        !{(1,2) }*+{\bullet_{1}}="a"
        !{(1,1) }*+{\bullet_{-1}}="b"
         !{(2.5,1) }*+{\bullet_{\zeta_6}}="c"
               !{(3,2) }*+{\bullet_{\zeta_3}}="d"
               !{(4.5,1) }*+{\bullet_{\zeta_6^{-1}}}="e"
               !{(4,2)}*+{\bullet_{\zeta_3^{-1}}}="f"
        !{(0.5,0.5) }*+{\bullet_{-i}}="g"
        !{(1.5,0.5) }*+{\bullet_{i}}="h"
        "a" :@(l,ru)  "a"
        "i" :@(l,ru)  "i"
        "j" :@(l,ru)  "j"
        "b":"a"
         "c":"d"
          "e":"f"
          "g":"b"
          "h":"b"
       "d":@/_/"f"
       "f":@/_/"d"
    }}
}
\caption{All possible quadratic periodic points for $\psi(z) = z^2$.}
\label{fig:z^2pts}
\end{figure}

\subsection{Maps conjugate to $\psi_2(z) = 1/z^2$}

From \cite{MMYY}, all such maps are conjugate over $\QQ$ to a map of the form 
\begin{equation}
\theta_{d,k}(z) = \frac{k z^2 - 2 d z +dk}{z^2 - 2 k z + d},
\qquad \text{with } k \in \QQ, \ d \in \QQ^{\times}, \text{ and } k^2\neq d.
\label{eqn:kdform}
\end{equation}
Conjugating this map by 
\[
f(z) = \frac{ z- \sqrt{d}}{z+ \sqrt{d}}
\quad \text{ yields } \quad
\theta_{d,k}^f(z) = \frac{t}{z^2} 
\quad\text{where } t = \frac{k - \sqrt d}{k + \sqrt d}.
\]
 
Conjugating this by $g(z) = t^{-1/3}z$ gives 
\[
\left(\theta_{d,k}^f\right)^g(z) = \frac{1}{z^2}.
\]

If $\alpha \in \QQ$ is preperiodic for $\theta_{d,k}$, then  $\beta=g^{-1}f^{-1}(\alpha)\in  \QQ(t^{1/3})$ is a preperiodic point for $\psi_2(z)$.  Since $[\QQ(\beta):\QQ] \leq 6$,  we may find all rational preperiodic structures for this family of maps by describing preperiodic points for $\psi_2$ of degree at most six.  Conjugating these points to lie in the rationals, we will find a map in the family with specified rational preperiodic points or show that none exists.

\begin{lem}\label{lem:psi2orbits}
All preperiodic points for $\psi_2(z) = 1/z^2$ of degree at most six are given in Figures~$\ref{fig:1/z^2ptsa}$--$\ref{fig:1/z^2ptsd}$.

\begin{figure}[h]
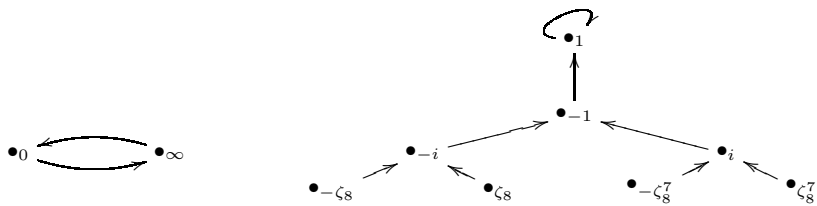

\centering
\mbox{
\subfigure{
    \xygraph{
        !{<0cm,0cm>;<2cm,0cm>:<0cm,1cm>::}
        !{(0,0) }*+{\bullet_{0}}="a"
        !{(1,0) }*+{\bullet_{\infty}}="b"
       "a":@/_/"b"
       "b":@/_/"a"
    }}
 \qquad\qquad
\subfigure{
    \xygraph{
        !{<0cm,0cm>;<2cm,0cm>:<0cm,1cm>::}
        !{(0,1.5) }*+{\bullet_{1}}="a"
        !{(0,.5) }*+{\bullet_{-1}}="b"
        !{(-1,0) }*+{\bullet_{-i}}="c"
        !{(1,0) }*+{\bullet_{i}}="d"
        !{(-1.6,-.5) }*+{\bullet_{-\zeta_8}}="e"
        !{(-.5,-.5) }*+{\bullet_{\zeta_8}}="f"
        !{(1.5,-.5) }*+{\bullet_{\zeta_8^7}}="g"
        !{(.5,-.5) }*+{\bullet_{-\zeta_8^7}}="h"
        "a" :@(l,ru)  "a"
         "b":"a"
         "c":"b"
         "d":"b"
         "e":"c"
         "f":"c"
         "g":"d"
         "h":"d"
    }}
}
\caption{$\psi_2(z) = 1/z^2$: Two cycle and one fixed point.}
\label{fig:1/z^2ptsa}
\end{figure}
%%%%
%%%%
%%%%
\begin{figure}[h]
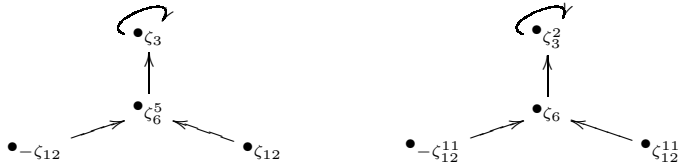

\centering
\mbox{
\subfigure{
    \xygraph{
        !{<0cm,0cm>;<2cm,0cm>:<0cm,1cm>::}
        !{(0,1.5) }*+{\bullet_{\zeta_3}}="a"
        !{(0,.5) }*+{\bullet_{\zeta_6^5}}="b"
        !{(-.75,0) }*+{\bullet_{-\zeta_{12}}}="c"
        !{(.75,0) }*+{\bullet_{\zeta_{12}}}="d"
        "a" :@(l,ru)  "a"
         "b":"a"
         "c":"b"
         "d":"b"
    }}
    \qquad\qquad
    \subfigure{
    \xygraph{
        !{<0cm,0cm>;<2cm,0cm>:<0cm,1cm>::}
        !{(0,1.5) }*+{\bullet_{\zeta_3^2}}="a"
        !{(0,.5) }*+{\bullet_{\zeta_6}}="b"
        !{(-.75,0) }*+{\bullet_{-\zeta_{12}^{11}}}="c"
        !{(.75,0) }*+{\bullet_{\zeta_{12}^{11}}}="d"
        "a" :@(l,ru)  "a"
         "b":"a"
         "c":"b"
         "d":"b"
    }}

}
\caption{$\psi_2(z) = 1/z^2$: Two additional fixed points.}
\label{fig:1/z^2ptsb}
\end{figure}
%%%%
%%%%
%%%%
\begin{figure}[h]
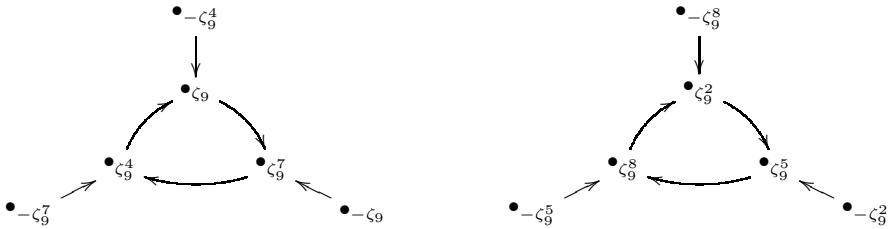

\centering
\mbox{
\subfigure{
    \xygraph{
        !{<0cm,0cm>;<2cm,0cm>:<0cm,1cm>::}
        !{(0,.5) }*+{\bullet_{\zeta_9}}="a"
        !{(.5,-.5) }*+{\bullet_{\zeta_9^7}}="b"
        !{(-.5,-.5) }*+{\bullet_{\zeta_{9}^{4}}}="c"
        !{(0,1.5) }*+{\bullet_{-\zeta_9^4}}="d"
        !{(1.1,-1.1) }*+{\bullet_{-\zeta_9}}="e"
        !{(-1.1,-1.1) }*+{\bullet_{-\zeta_9^7}}="f"
       "a":@/^/"b"
       "b":@/^/"c"
       "c":@/^/"a"
         "d":"a"
         "e":"b"
         "f":"c"
    }}
    \qquad\qquad
    \subfigure{
    \xygraph{
        !{<0cm,0cm>;<2cm,0cm>:<0cm,1cm>::}
        !{(0,.5) }*+{\bullet_{\zeta_9^2}}="a"
        !{(.5,-.5) }*+{\bullet_{\zeta_9^5}}="b"
        !{(-.5,-.5) }*+{\bullet_{\zeta_{9}^{8}}}="c"
        !{(0,1.5) }*+{\bullet_{-\zeta_9^8}}="d"
        !{(1.1,-1.1) }*+{\bullet_{-\zeta_9^2}}="e"
        !{(-1.1,-1.1) }*+{\bullet_{-\zeta_9^5}}="f"
       "a":@/^/"b"
       "b":@/^/"c"
       "c":@/^/"a"
         "d":"a"
         "e":"b"
         "f":"c"
    }}
}
\caption{$\psi_2(z) = 1/z^2$: Two three-cycles.}
\label{fig:1/z^2ptsc}
\end{figure}
%%%%
%%%%
%%%%
\begin{figure}[h]
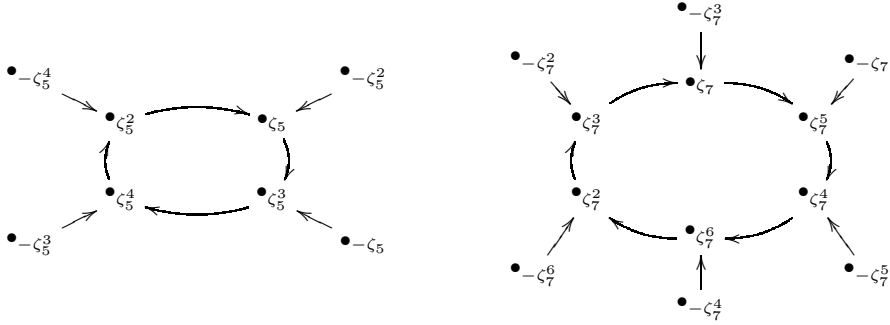

\centering
\mbox{
\subfigure{
    \xygraph{
        !{<0cm,0cm>;<2cm,0cm>:<0cm,1cm>::}
        !{(.5,.5) }*+{\bullet_{\zeta_5}}="a"
        !{(.5,-.5) }*+{\bullet_{\zeta_5^3}}="b"
        !{(-.5,-.5) }*+{\bullet_{\zeta_{5}^{4}}}="c"
        !{(-.5,.5) }*+{\bullet_{\zeta_{5}^{2}}}="d"
        !{(1.1,1.1) }*+{\bullet_{-\zeta_5^2}}="e"
        !{(-1.1,1.1) }*+{\bullet_{-\zeta_{5}^4}}="f"
        !{(1.1,-1.1) }*+{\bullet_{-\zeta_5}}="g"
        !{(-1.1,-1.1) }*+{\bullet_{-\zeta_5^3}}="h"
       "a":@/^/"b"
       "b":@/^/"c"
       "c":@/^/"d"
       "d":@/^/"a"
         "e":"a"
         "f":"d"
         "g":"b"
         "h":"c"
    }}
    \qquad\qquad
    \subfigure{
    \xygraph{
        !{<0cm,0cm>;<2cm,0cm>:<0cm,1cm>::}
        !{(0,1) }*+{\bullet_{\zeta_7}}="a"
        !{(.75,.5) }*+{\bullet_{\zeta_7^5}}="b"
        !{(.75,-.5) }*+{\bullet_{\zeta_7^4}}="c"
        !{(-.75,-.5) }*+{\bullet_{\zeta_{7}^{2}}}="d"
        !{(-.75,.5) }*+{\bullet_{\zeta_{7}^{3}}}="e"
        !{(0,-1) }*+{\bullet_{\zeta_{7}^{6}}}="f"
        !{(0,1.95) }*+{\bullet_{-\zeta_7^3}}="g"
        !{(1.1,1.3) }*+{\bullet_{-\zeta_7}}="h"
        !{(-1.1,1.3) }*+{\bullet_{-\zeta_{7}^{2}}}="i"
        !{(1.1,-1.5) }*+{\bullet_{-\zeta_7^5}}="j"
        !{(-1.1,-1.5) }*+{\bullet_{-\zeta_7^6}}="k"
        !{(0,-1.95) }*+{\bullet_{-\zeta_7^4}}="l"
       "b":@/^/"c"
       "c":@/^/"f"
       "f":@/^/"d"
       "d":@/^/"e"
       "e":@/^/"a"
       "a":@/^/"b"
         "g":"a"
         "h":"b"
         "i":"e"
         "j":"c"
         "k":"d"
         "l":"f"
    }}
}
\caption{$\psi_2(z) = 1/z^2$: A four-cycle and a six-cycle.}
\label{fig:1/z^2ptsd}
\end{figure}

\end{lem}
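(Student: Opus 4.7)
The plan is to reduce the dynamics of $\psi_2$ on $\bar\QQ$ to pure multiplicative arithmetic, and then enumerate orbits by order. First, I would observe that $\psi_2^n(\alpha) = \alpha^{(-2)^n}$ for every $\alpha\in\bar\QQ^{\times}$, so $\alpha$ is preperiodic precisely when $\alpha^{(-2)^n-(-2)^m}=1$ for some $n>m\geq 0$; the exponent is a nonzero integer, which forces $\alpha$ to be a root of unity. Hence the full preperiodic set of $\psi_2$ in $\bar\QQ$ is the $2$-cycle $\{0,\infty\}$ together with the group of all roots of unity.

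Next, I would restrict to degree at most $6$. If $\alpha$ is a primitive $n$-th root of unity, then $[\QQ(\alpha):\QQ]=\varphi(n)$, so the constraint is $\varphi(n)\leq 6$. A short case check---any odd prime divisor of $n$ must satisfy $p-1\leq 6$, so the prime divisors of $n$ lie in $\{2,3,5,7\}$, and then examining small prime-power combinations---yields exactly
\[
n\in\{1,2,3,4,5,6,7,8,9,10,12,14,18\}.
\]

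For each admissible $n$ I would then compute the orbit of each primitive $n$-th root under $\zeta\mapsto\zeta^{-2}$; at the level of exponents this is the endomorphism $k\mapsto -2k$ of $\ZZ/n\ZZ$. If $\gcd(-2k,n)=1$ the image remains a primitive $n$-th root and contributes to a cycle; otherwise the image drops into the group of $m$-th roots of unity for a proper divisor $m\mid n$, and one continues the calculation there. Assembling the resulting data across all thirteen values of $n$ populates Figures~\ref{fig:1/z^2ptsa}--\ref{fig:1/z^2ptsd}: the layers $n\in\{2,4,8\}$ form a tree attached to the fixed point $1$; the layers $n\in\{6,12\}$ attach to the two further fixed points $\zeta_3,\zeta_3^2$; the layer $n=18$ attaches to the two $3$-cycles coming from $n=9$; and the layers $n=10$ and $n=14$ attach respectively to the $4$-cycle coming from $n=5$ and the $6$-cycle coming from $n=7$.

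The only real obstacle is careful bookkeeping of these ``layer crossings'': when $\gcd(-2k,n)>1$, the image must be re-expressed in the correct cyclic subgroup before the next iterate is taken, and one must check that no primitive root from the admissible list has been placed in the wrong graph or omitted altogether. With that care, the four figures account for every preperiodic point of $\psi_2$ of degree at most $6$, and no spurious points appear.
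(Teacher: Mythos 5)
Your proposal is correct, and it reaches the same catalogue as the paper, but by a somewhat different route. The paper proceeds in two stages: it first determines the periodic points by computing $\psi_2^n(z)=z^{\pm 2^n}$, concluding that the nonzero finite periodic points are roots of unity of odd order, so that those of degree at most six are powers of $1,\zeta_3,\zeta_5,\zeta_7,\zeta_9$; it then builds the preperiodic structure backwards, taking successive preimages of these cycles (and of the $2$-cycle $\{0,\infty\}$) until the fields generated exceed degree six. You instead characterize the entire preperiodic locus in one stroke: $\psi_2^n(\alpha)=\alpha^{(-2)^n}$, so preperiodicity forces $\alpha^{(-2)^n-(-2)^m}=1$ with a nonzero integer exponent, hence $\alpha$ is a root of unity (together with the $2$-cycle $\{0,\infty\}$); then the degree condition becomes $\varphi(n)\le 6$, giving exactly $n\in\{1,2,3,4,5,6,7,8,9,10,12,14,18\}$, and you iterate forward via the exponent map $k\mapsto -2k \pmod n$. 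Your forward enumeration has the advantage that completeness is automatic --- every admissible order is listed in advance, so no preperiodic point can be overlooked --- and it sidesteps the paper's slightly delicate intermediate observation that the cycle reached from a degree-$\le 6$ preperiodic point itself has degree $\le 6$ (because the cycle consists of powers of $\beta$). The paper's preimage search, on the other hand, naturally organizes the verification around the cycles appearing in the figures and makes clear where the trees terminate (preimages of the primitive $8$th, $12$th, $10$th, $14$th, and $18$th roots have degree exceeding six). Either way the final step is the same finite check that the exponent-map orbits assemble into Figures~\ref{fig:1/z^2ptsa}--\ref{fig:1/z^2ptsd}, and your attachment of the layers ($n\in\{2,4,8\}$ to the fixed point $1$; $n\in\{6,12\}$ to $\zeta_3,\zeta_3^2$; $n=18$ to the two $3$-cycles from $n=9$; $n=10,14$ to the $4$- and $6$-cycles from $n=5,7$) agrees with the figures.
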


\begin{proof}
For $n$ even, $\psi_2^n(z) = z^{2^n}$, so the points with period dividing $n$ are $0$, $\infty$, and  $(2^n-1)^{\textup{st}}$ roots of unity.  For $n$ odd,  $\psi_2^n(z) =z^{-2^n}$, so the points with period dividing $n$ are $(2^n+1)^{\textup{st}}$ roots of unity.  Hence all strictly periodic points other than $0$ and $\infty$ are roots of unity of odd order.
The only roots of unity of odd order with degree no more than $6$ are powers of $\{1, \zeta_3, \zeta_5, \zeta_7, \zeta_9\}$.  We may find their periodic structures by  iterating $\psi_2$ with the appropriate seed values.

Let $\beta$ be a preperiodic point for $\psi_2$.  Then $[\QQ(\beta):\QQ] \leq 6$ if and only if all powers of $\beta$ also satisfy $[\QQ(\beta^n):\QQ] \leq 6$.  In particular, the orbit of $\beta$ lands in some cycle, and the points of that cycle have degree no more than six.  Hence we can find all preperiodic points for $\psi_2$ having degree no more than six by finding preimages of the periodic points described above, and continuing until the field generated by the preimages has degree greater than six.
It is a simple matter to verify that this process yields the diagrams given.
\end{proof}

\begin{prop}\label{prop:2cyc}
Let  $\phi(z) \in \QQ$ be conjugate over $\overline \QQ$ to $\psi_2$.  Then $\phi$ has no points of type $2_n$ for $n\geq1$.  For $m\neq 2$, $\phi$ has the same number of rational points of type $m_1$ as it has rational points of primitive period $m$.
\end{prop}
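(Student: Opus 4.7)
My approach is to exploit the conjugacy $h = g \circ f$ carrying $\phi = \theta_{d,k}$ to $\psi_2(z) = 1/z^2$, together with the key structural observation that the two critical points of $\phi$ are exactly $\pm\sqrt{d}$ and that these form the unique $2$-cycle of $\phi$, mirroring $\{0, \infty\}$ for $\psi_2$ (which is also the critical set). This renders the $2$-cycle ``isolated'' under iteration: its only preimages are the critical points themselves, each with ramification index~$2$.

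For the first part, suppose for contradiction that $\beta \in \QQ$ is a rational point of type $2_n$ with $n \geq 1$. Then $\phi^n(\beta) \in \{\sqrt{d}, -\sqrt{d}\}$, and rationality of $\beta$ forces $d = c^2$ for some $c \in \QQ^\times$. But then $\pm c$ are precisely the critical points of $\phi$, so $\phi^{-1}(c) = \{-c\}$ and $\phi^{-1}(-c) = \{c\}$, each a double root. Hence no strictly preperiodic point can map into $\{c, -c\}$, a contradiction.

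For the second part, I would exhibit a bijection $\Phi \colon \beta \mapsto \phi(\beta)$ between rational points of type $m_1$ and rational periodic points of primitive period $m$. Well-definedness is immediate from the definition of type $m_1$. Injectivity: any period-$m$ point $\alpha$ has at most two preimages under $\phi$ (which has degree~$2$), one of which is the periodic point $\phi^{m-1}(\alpha)$, so at most one strictly preperiodic preimage can exist. Surjectivity: for $m \neq 2$, $\alpha$ is not a critical value (since the critical values lie on the $2$-cycle), so $\phi^{-1}(\alpha)$ consists of two distinct points; these are the roots of a quadratic over $\QQ$, and since $\phi^{m-1}(\alpha) \in \QQ$ is one root, the other root $\beta$ is forced to be rational as well. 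A short argument rules out $\beta$ being periodic: if it were, it would lie on the $m$-cycle through $\alpha$, forcing $\beta = \phi^{m-1}(\alpha)$, a contradiction. Hence $\beta$ is of type $m_1$ with $\Phi(\beta) = \alpha$.

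The main obstacle is establishing and cleanly stating the structural fact that the critical points of $\theta_{d,k}$ are $\pm\sqrt{d}$ and form a $2$-cycle; once this is in hand, both parts reduce to the ``isolated $2$-cycle'' observation combined with the standard fact that for a degree-$2$ map defined over $\QQ$, a rational preimage of a rational point forces its sibling preimage to be rational as well. I would verify the critical-point claim by pulling back $0$ and $\infty$ through the conjugacy $h$ (so $h^{-1}(0) = \sqrt{d}$ and $h^{-1}(\infty) = -\sqrt{d}$) and then confirming $\theta_{d,k}(\sqrt{d}) = -\sqrt{d}$ directly from the normal form~\eqref{eqn:kdform}.
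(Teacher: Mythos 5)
Your proposal is correct and is essentially the paper's argument: both parts rest on the observation that the critical points of $\phi$ form its unique $2$-cycle (so the cycle has no preimages outside itself, and for $m\neq 2$ a rational period-$m$ point is not a critical value, forcing its second, off-cycle preimage to be rational), the only difference being that you make this concrete via the normal form $\theta_{d,k}$ and the explicit critical points $\pm\sqrt{d}$, while the paper argues directly from conjugation-invariance. One small remark: the statement claims there are no points of type $2_n$ at all, not merely rational ones, and your own ``isolated $2$-cycle'' observation already yields this stronger claim, so the detour through $\beta\in\QQ$ and $d=c^2$ is unnecessary.
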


\begin{proof}
The critical points of $\psi_2$  lie on a two cycle, and this property is preserved under conjugation.  Therefore each critical point is also a critical value, so if the critical points of $\phi$ are $\{ \gamma_1, \gamma_2 \}$ we have $\phi^{-1}(\gamma_i) = \{ \gamma_j \}$ for $i\neq j$.  Hence $\phi$ has no points of type $2_1$ and it follows that $\phi$ has no points of type $2_n$ for $n\geq 1$.

Let $\alpha$ be a rational point of primitive period $m$ for $\phi$.  Then all points on the $m$-cycle containing $\alpha$ are also rational since $\phi(z) \in \QQ(z)$.  Therefore the quadratic $\phi(z) = \alpha$ has one rational root.  Since $m \neq 2$, $\alpha$ is not one of the critical values of $\phi$ by the argument above.  Hence, the quadratic $\phi(z) = \alpha$ has two distinct roots and both must be rational.  That is, there is a rational point $\beta$ not on the $m$-cycle satisfying $\phi(\beta) = \alpha$, and $\beta$ is a point of type $m_1$.
\end{proof}

\begin{prop}
Let $\phi(z) \in \QQ(z)$ be conjugate to $\psi_2$.  If $\phi$ has a rational two-cycle then it may have either no rational fixed points or one rational fixed point.  In either case, it has no other rational preperiodic points except the required point of type $1_1$.
\end{prop}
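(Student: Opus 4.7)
The plan is to reduce $\phi$ to the canonical form $\theta_{1, k}$, then pull back rational preperiodic points via the conjugation chain $\theta_{1, k} \to t/z^2 \to \psi_2$ and classify them using Lemma~\ref{lem:psi2orbits}.

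First, I reduce to canonical form. From the discussion preceding this proposition, $\phi$ is $\QQ$-conjugate to some $\theta_{d, k}$ as in~\eqref{eqn:kdform}. The critical points of $\theta_{d, k}$ are $\pm\sqrt{d}$ and they form its unique strict $2$-cycle (inherited from the unique $2$-cycle $\{0, \infty\}$ of $\psi_2$), so a rational $2$-cycle forces $d$ to be a rational square. Since $\sqrt{d} \in \QQ$ in that case, conjugating by $z \mapsto z/\sqrt{d}$ is defined over $\QQ$, and I may assume $d = 1$. With $t = (k-1)/(k+1) \in \QQ^\times$ and $s$ the real cube root of $t$, the composition $h = g \circ f$, where $f(z) = (z-1)/(z+1)$ and $g(z) = s^{-1} z$, conjugates $\theta_{1, k}$ to $\psi_2$.

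Second, I transport preperiodic points. For $\alpha \in \QQ$ preperiodic for $\theta_{1, k}$, the image $\beta = h(\alpha) = s^{-1}(\alpha-1)/(\alpha+1)$ is preperiodic for $\psi_2$, and the rationality of $\alpha$ is equivalent to $\beta s = (\alpha-1)/(\alpha+1) \in \QQ$. By Lemma~\ref{lem:psi2orbits}, either $\beta \in \{0, \infty\}$ (yielding $\alpha = \pm 1$, the $2$-cycle) or $\beta$ is a root of unity. In the latter case, cubing $\beta s \in \QQ$ gives $\beta^3 t \in \QQ$, so $\beta^3 \in \QQ$ is a rational root of unity and hence $\beta^3 = \pm 1$. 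Since $s \in \mathbb{R}$ and $\beta s \in \mathbb{R}$, $\beta$ must be a real sixth root of unity, so $\beta \in \{\pm 1\}$; this in turn forces $s = \pm \beta s \in \QQ$, meaning $t$ is a rational cube.

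The case split now follows. If $t$ is not a rational cube, the only rational preperiodic points are the $2$-cycle $\{1, -1\}$ and $\phi$ has no rational fixed points. If $t = s^3$ for some $s \in \QQ^\times$, then $\beta = 1$ (a fixed point of $\psi_2$) lifts to the rational fixed point $\alpha = (1+s)/(1-s)$ of $\phi$, and $\beta = -1$ (a point of type $1_1$ for $\psi_2$) lifts to the rational point $\alpha = (1-s)/(1+s)$ of type $1_1$ mapping to that fixed point; routine verification shows these four points are pairwise distinct and exhaust the rational preperiodic set.

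The main obstacle is the Galois-theoretic step in the second paragraph: from $\beta s \in \QQ$ and $\beta$ being a root of unity, deducing that $\beta \in \{\pm 1\}$. The cubing trick, together with the facts that $\pm 1$ are the only rational roots of unity and that $s$ is real, provides the crucial leverage. Everything else is bookkeeping given the catalog of $\psi_2$-preperiodic points in Lemma~\ref{lem:psi2orbits}.
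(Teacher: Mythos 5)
Your proof is correct and follows essentially the same route as the paper: reduce to the one-parameter family $t/z^2$ (the paper cites \cite[Lemma 5.1]{MMYY} for this, whereas you re-derive it from the critical $2$-cycle $\{\pm\sqrt d\}$ of $\theta_{d,k}$), note that a rational fixed point exists exactly when $t$ is a rational cube, and pull back the catalog of Lemma~\ref{lem:psi2orbits} along the conjugation to $\psi_2$. The only real difference is that you make explicit the cubing/Galois-theoretic step that the paper compresses into ``applying $f$ to the preperiodic structures given in Lemma~\ref{lem:psi2orbits}, we find no other rational preperiodic points.''
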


\begin{proof}
From \cite[Lemma 5.1]{MMYY},  we see that $\phi$ has a rational two-cycle if and only if it is conjugate over $\QQ$  to $\theta_t(z) = \frac{t}{z^2}$ for some $t \in \QQ^{\times}$.  Solving $\theta_t(z) = z$, we see that there is a rational fixed point if and only if $t \in ( \QQ^{\times})^3$, and all such maps are conjugate over $\QQ$.  

Furthermore, if $f(z) = t^{-1/3}z$, then $\theta_t^f(z) = \psi_2$.  Applying $f$ to the preperiodic structures given in Lemma~\ref{lem:psi2orbits}, we find no other rational preperiodic points.
\end{proof}

By Proposition~\ref{prop:2cyc}, we have only two rational preperiodic structures for maps conjugate to $\phi_2(z)$ that contain rational points of primitive period 2.  These are the first two maps represented in Table~\ref{table: twists 2}.

\begin{prop}\label{prop:fpstruct}
Let  $\phi(z) \in \QQ$ be conjugate over $\overline \QQ$ to $\psi_2$.  Suppose $\phi$ has no rational points of period $n>1$.  Then $\phi$ has one of the following rational preperiodic structures:
\begin{enumerate}[\textup(a\textup)]
\item
$\phi$ has no rational fixed points \textup(hence no rational preperiodic points at all\textup);
\item
$\phi$ has exactly one rational fixed point and one point of type $1_1$ but no other rational preperiodic points;
\item
$\phi$ has exactly one rational fixed point, one rational point of type $1_1$, and two rational points of type $1_2$, with no other rational preperiodic points; or
\item
$\phi$ has exactly three rational fixed points and three rational points of type $1_1$, with no other rational preperiodic points.
\end{enumerate}
\end{prop}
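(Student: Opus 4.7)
The plan is to exploit the explicit form $\theta_{d,k}$ from~\eqref{eqn:kdform} together with Lemma~\ref{lem:psi2orbits} to control the preperiodic tree of $\phi$ via the $\psi_2$ side. Since $\phi \in \QQ(z)$, the forward orbit of any rational preperiodic point stays in $\QQ$ and must end in a rational cycle; the hypothesis together with Proposition~\ref{prop:2cyc} rules out all rational cycles of period $>1$, so every rational preperiodic point lies in the backward tree of some rational fixed point of $\phi$. The fixed points are the roots of the cubic $P(z) = z^3 - 3kz^2 + 3dz - dk \in \QQ[z]$, so the number of rational fixed points is $0$, $1$, or $3$; these three cases will furnish (a), (b)/(c), and (d) respectively.

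The crux is to control the discriminant of the preimage quadratic. For $\beta \in \QQ$ with $\beta \neq k$, the preimages of $\beta$ under $\phi$ are roots of $(k-\beta)z^2 + 2(k\beta - d)z + d(k-\beta) = 0$, and the discriminant factors (up to square factors) as $(k^2-d)(\beta^2 - d)$; the product of the two roots equals $d$. When $\alpha$ is a rational fixed point, solving $P(\alpha) = 0$ gives $k = \alpha(\alpha^2 + 3d)/(3\alpha^2 + d)$, and I would then verify by direct expansion the key identity
\[
k^2 - d = \frac{(\alpha^2 - d)^3}{(3\alpha^2 + d)^2}.
\]
At $\beta = \alpha$ this shows $(k^2-d)(\alpha^2 - d)$ is a rational square, so the two preimages of $\alpha$ are both rational; by the product formula the second preimage is $d/\alpha$, distinct from $\alpha$ because $\alpha^2 = d$ together with $P(\alpha)=0$ forces $\alpha = k$ and hence $k^2 = d$, violating~\eqref{eqn:kdform}. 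This yields the rational type $1_1$ point in cases (b), (c), (d). At $\beta = d/\alpha$, applying the key identity gives
\[
(k^2 - d)\bigl((d/\alpha)^2 - d\bigr) = -d\,\Bigl(\frac{(\alpha^2 - d)^2}{\alpha(3\alpha^2 + d)}\Bigr)^2,
\]
so the two type $1_2$ preimages of $d/\alpha$ are rational precisely when $-d$ is a rational square (the edge case $\alpha = 0$ forces $k = 0$ and yields the same conclusion directly from the denominator of $\phi$).

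For case (d) the discriminant of $P(z)$ equals $-108\,d\,(k^2-d)^2$, which is a rational square exactly when $-3d$ is; if $-d$ were also a square, then $3$ would be a square in $\QQ$, so in case (d) necessarily $-d$ is not a square, giving the $3+3$ structure with no type $1_2$ points. In case (c), writing $d = -e^2$ with $e \in \QQ^\times$, I would solve the preimage quadratic to obtain $\beta_1 = e(e+\alpha)/(e-\alpha)$ and $\beta_2 = -e(e-\alpha)/(e+\alpha)$, and then evaluate the discriminant at $\beta_1$ to find
\[
(k^2 - d)(\beta_1^2 - d) = 2 \Bigl[\frac{e(\alpha^2 + e^2)^2}{(3\alpha^2 - e^2)(e-\alpha)}\Bigr]^2,
\]
which is not a rational square because $2$ is not a square in $\QQ$; hence no rational type $1_3$ preimages exist, and Lemma~\ref{lem:psi2orbits} rules out anything deeper because the conjugation field $\QQ(\sqrt d, t^{1/3})$ has degree at most $6$. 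Assembling these observations yields exactly the four cases (a)--(d). The main obstacle will be the clean derivation of the key identity $k^2 - d = (\alpha^2 - d)^3/(3\alpha^2 + d)^2$; once that is in hand, every subsequent discriminant is visibly a monomial times a square and the case analysis is immediate.
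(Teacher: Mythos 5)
Your argument is correct in substance but proceeds quite differently from the paper. The paper's proof is example-plus-rigidity: it exhibits explicit maps realizing (a)--(d) (the $(k,d)=(1,2)$ and $(0,2)$ specializations of~\eqref{eqn:kdform}, and two conjugates of $\psi_2$ moving $\{1,\pm i\}$ resp.\ $\{1,\zeta_3,\zeta_3^2\}$ to rational triples), verifies their preperiodic structures computationally, and then argues exhaustiveness by the parity of rational roots of the fixed-point cubic together with the observation that any map with a putative richer structure (rational type $1_3$ points, or three rational fixed points plus type $1_2$ points) would be $\QQ$-conjugate to one of the already-computed maps, with Lemma~\ref{lem:psi2orbits} limiting which configurations can occur at all. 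You instead run a uniform discriminant analysis on the normal form $\theta_{d,k}$: your identities all check out --- the preimage discriminant is $4(k^2-d)(\beta^2-d)$ with root product $d$, a rational fixed point gives $k=\alpha(\alpha^2+3d)/(3\alpha^2+d)$ and $k^2-d=(\alpha^2-d)^3/(3\alpha^2+d)^2$, $\operatorname{disc}P=-108\,d\,(k^2-d)^2$, and the type $1_2$ and $1_3$ discriminants are $-d$ resp.\ $2$ times nonzero rational squares --- so you get a sharper statement than the paper does, namely an explicit criterion ($-d$ a square, $-3d$ a square) deciding which of (a)--(d) occurs for given $(d,k)$, with no computer verification of individual maps; the trade-off is that your argument by itself does not exhibit that each structure is actually realized (not needed for the statement, but the paper wants it for Table~\ref{table: twists 2}), and it leans on the $\QQ$-normal form $\theta_{d,k}$ from~\cite{MMYY} just as the paper does.

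Three loose ends you should tie up, all minor and fixable: (i) the preimage quadratic degenerates when the point being pulled back equals $k$ (equivalently $\infty$ is a preimage); this happens for the type $1_1$ point exactly when $d=-k^2$, in which case the preimages are $\{0,\infty\}$, both rational, consistent with your criterion since $-d=k^2$, and the type $1_3$ discriminants there are $8k^2$ and $8k^4$, again $2$ times squares --- but the case must be stated, and likewise a type $1_2$ point at $\infty$ has its preimages read off the denominator; (ii) you only evaluate the type $1_3$ discriminant at $\beta_1$; $\beta_2$ follows by the symmetry $e\mapsto -e$, but say so (and note the bracketed quantity is nonzero after choosing the sign of $e$ so that $e\neq\alpha$); (iii) the appeal to Lemma~\ref{lem:psi2orbits} to exclude deeper points is unnecessary and slightly misaimed --- since forward orbits of rational points are rational, the absence of rational type $1_3$ points already precludes rational type $1_n$ points for all $n\geq 3$.
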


\begin{proof}
Choosing $k=1$ and $d=2$ in the normal form from equation~\eqref{eqn:kdform} yields the map
\[
\frac{z^2-4 z+2}{z^2-2 z+2}.
\]
One can check computationally that this map has no rational points of primitive period~1, 2, 3, 4, or~6.  By Lemma~\ref{lem:psi2orbits}, these are the only possibilities.

Choosing $k=0$ and $d=2$ in the normal form from equation~\eqref{eqn:kdform} yields the map
\[
-\frac{4 z}{z^2+2}.
\]
One can check computationally that this map has fixed point~$0$ and no other rational points of primitive period~1, 2, 3, 4, or~6.  By Lemma~\ref{lem:psi2orbits}, these are the only possibilities.  We also have $\infty \mapsto 0$, a rational point of type $1_1$.  The preimages of $\infty$ are not rational, so there are no other rational preperiodic points.

Beginning with the preperiodic structure described in Lemma~\ref{lem:psi2orbits}, we see that conjugating $\phi_2$ by any $f \in \PGL_2$ which maps three arbitrary rational points to $1$, $i$, and $-i$ creates a map with rational type $1_2$ points.  Choose 
\[
f(z) = \frac{i z+1}{z+i} \quad \text{which yields } \psi_2^f(z) = \frac{-z^2+2 z+1}{z^2+2 z-1}.
\]
One can check computationally that this map has no rational point of period 2, 3, 4, or~6.  The only rational fixed point is 1;  $-1$ is a type $1_1$ point;  and $0$ and $\infty$ are type $1_2$ points.  There are no rational type $1_3$ points.

Again, beginning with the preperiodic structure described in Lemma~\ref{lem:psi2orbits}, we see that conjugating $\phi_2$ by any $f \in \PGL_2$ which maps three arbitrary rational points to $1$, $\zeta_3$, and $\zeta_3^2$.  yields a map with three rational fixed points.  Choose 
\[
f(z) =\frac{(1 + \zeta_3) z + 1}{z - \zeta_3^2}
\quad \text{which yields } \psi_2^f(z) = -\frac{(z-2) z}{2 z-1}.
\]
This map has fixed points at 0, 1, and $\infty$ and the corresponding rational type $1_1$ points.
One can check computationally that this map has no rational point of period 2, 3, 4, or~6, and no rational type $1_2$ points.

We have shown that each of the possibilities listed are possible for maps conjugate to $\psi_2$.  It remains to check that no other possibilities exist.  

Since $\phi$ is defined over $\QQ$, the cubic polynomial $\phi(z) = z$ has either zero, one, or three rational roots.  Hence we cannot have exactly two rational fixed points.

If a map $\phi$ is conjugate to $\psi_2$ and has rational points of type $1_3$, then it is conjugate over $\QQ$ to a map with $\QQ$ to a map with a fixed point at $1$ and the type $1_2$ points at $0$ and $\infty$.  We found such a map above, and it does not have  rational type $1_3$ points. 

Similarly, if a map $\phi$ is conjugate to $\psi_2$ and has three rational fixed points and rational points of type $1_2$, then it is conjugate over $\QQ$ to a map with one fixed point at $1$ and its type $1_2$ points at $0$ and $\infty$.  We found such a map above, and it does not have additional rational fixed points. 
We have now exhausted all possibilities.
\end{proof}

By Proposition~\ref{prop:fpstruct}, the  third through sixth rational preperiodic structures in  Table~\ref{table: twists 2}
 are the only ones possible for maps conjugate to $\psi_2$ that have no rational points of least period $n>1$.

\begin{prop}\label{prop:3cyc}
Let  $\phi(z) \in \QQ$ be conjugate over $\overline \QQ$ to $\psi_2$.  Suppose $\phi$ has a rational point of period $3$.  Then $\phi$ has exactly three such points and three points of type $3_1$.  The map  $\phi$ has no other rational preperiodic points.
\end{prop}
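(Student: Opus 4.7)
The plan is to follow the template of Propositions~\ref{prop:2cyc} and~\ref{prop:fpstruct}. First, Proposition~\ref{prop:2cyc} handles the $3_1$ count directly: since $\phi \in \QQ(z)$, any rational $3$-periodic point forces its entire $3$-cycle to be Galois-stable and hence rational, giving three rational points of primitive period~$3$, and Proposition~\ref{prop:2cyc} then immediately yields three corresponding rational points of type $3_1$. Thus $\phi$ has at least six rational preperiodic points, and the task reduces to showing that no others exist.

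For the second step, I normalize by $\QQ$-conjugation. Because $\PGL_2(\QQ)$ acts sharply $3$-transitively on $\PP^1(\QQ)$, I may conjugate $\phi$ over $\QQ$ to place its rational $3$-cycle at, say, $\{0,1,\infty\}$ in a prescribed cyclic order. This singles out a specific representative $\phi_0 \in \QQ(z)$, conjugate to $\psi_2$ over $\overline{\QQ}$; Theorem~\ref{MMYY normal form}, combined with the cycle conditions $\phi_0(0)=1$, $\phi_0(1)=\infty$, $\phi_0(\infty)=0$, lets me write $\phi_0$ down explicitly as a map in the form~\eqref{eqn:kdform}.

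The final step is to compute the complete rational preperiodic graph of $\phi_0$. By the setup preceding Lemma~\ref{lem:psi2orbits}, every rational preperiodic point of $\phi_0$ pulls back along the $\overline{\QQ}$-conjugation to a preperiodic point of $\psi_2$ of degree at most $6$ over $\QQ$, and that lemma enumerates all such points in Figures~$\ref{fig:1/z^2ptsa}$--$\ref{fig:1/z^2ptsd}$. A finite, direct computation with the explicit $\phi_0$ --- parallel to those in the proof of Proposition~\ref{prop:fpstruct} --- then confirms that the other preperiodic orbits of $\psi_2$ (the second $3$-cycle, the remaining fixed points, the $2$-cycle $\{0,\infty\}$, and the $1_2$, $4$-cycle, and $6$-cycle structures) contribute no further rational points. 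The main obstacle is simply organizing this case-by-case verification cleanly; since $\phi_0$ is explicit and the list of candidate orbits from Lemma~\ref{lem:psi2orbits} is finite and short, the check is routine and can be confirmed with Sage as in the earlier propositions.
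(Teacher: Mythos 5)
Your overall architecture is the same as the paper's: move the rational $3$-cycle to $0 \mapsto 1 \mapsto \infty$ by a $\QQ$-conjugation, pin down an explicit representative, invoke Lemma~\ref{lem:psi2orbits} (via the degree-$6$ pullback to $\psi_2$) to reduce to finitely many candidate periods and types, and finish with a direct computation; your use of Proposition~\ref{prop:2cyc} to get the three type-$3_1$ points for free is a legitimate small shortcut (the paper instead just verifies them computationally).

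However, the step where you produce the explicit representative fails as written. No map of the form~\eqref{eqn:kdform} can have a periodic cycle containing both $0$ and $\infty$: one computes $\theta_{d,k}(0)=\theta_{d,k}(\infty)=k$, and a map is injective on any cycle, so $0$ and $\infty$ cannot lie on a common cycle (concretely, your condition $\theta(\infty)=0$ forces $k=0$, and then $\theta(0)=0$, contradicting $\theta(0)=1$). So "write $\phi_0$ in the form~\eqref{eqn:kdform} with cycle $0\mapsto 1\mapsto\infty\mapsto 0$" is vacuous. Moreover, Theorem~\ref{MMYY normal form} is not the right tool here: that normal form has nonzero resultant exactly when the stabilizer is trivial, whereas every map in this section has the nontrivial stabilizer of $\psi_2$, which is why the paper works with the separate family~\eqref{eqn:kdform} at all. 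The repair is what the paper does: keep the marked cycle at $0,1,\infty$ but construct the representative directly by conjugating $\psi_2$ over $\overline\QQ$ by the M\"obius map $f$ carrying $0,1,\infty$ to a $3$-cycle of ninth roots of unity from Lemma~\ref{lem:psi2orbits}; this yields $\psi_2^f(z)=\frac{2z-1}{z^2-1}$, which is defined over $\QQ$ and is the (asserted unique) quadratic map in this conjugacy class with that cycle. With that representative in hand, your concluding finite check --- no other rational points of period $1,2,3,4,6$, rational $3_1$ points present, $3_2$ points irrational --- proceeds exactly as in the paper, so the rest of your plan goes through once this construction step is fixed.
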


\begin{proof}
If $\phi$ is conjugate to $\psi_2$ and has a rational point of period $3$, then it is conjugate over $\QQ$ to a map with the three cycle $0 \mapsto 1 \mapsto \infty \mapsto 0 \mapsto \cdots$.  This conjugacy completely specifies the map.  Given the preperiodic structure described in Lemma~\ref{lem:psi2orbits}, we may begin with $f \in \PGL_2$ which maps $0$, $1$, and $\infty$ to $\zeta_9$, $\zeta_9^7$, and $\zeta_9^6$.  This is 
\[
f = \frac{\zeta_9^6 z - \zeta_9^7}{-z + \zeta_9^4}
\quad \text{ which yields } \psi_2^f(z) = \frac{2 z-1}{z^2-1}.
\]
One may verify computationally that this map has the desired three cycle and no other rational points of period 1, 2, 3, 4, or~6.  It has rational type $3_1$ points mapping into the three cycle, but the type $3_2$ points are not rational.
\end{proof}

By Proposition~\ref{prop:3cyc}, there is only one rational preperiodic structure for maps conjugate to $\psi_2$ that have a rational point of primitive period $3$.  This is the last map in Table~\ref{table: twists 2}.

\begin{prop}\label{prop: cyc}
Let  $\phi(z) \in \QQ$ be conjugate over $\overline \QQ$ to $\psi_2$.  Then $\phi$ has no rational points of period $n > 3$.
\end{prop}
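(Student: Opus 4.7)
The plan is to reduce, as in the preceding propositions, to the case $\phi = \theta_{d,k}$ for some $k \in \QQ$ and $d \in \QQ^{\times}$ with $k^2 \neq d$, since rational periodic points correspond bijectively under $\QQ$-conjugacy. The key device is the explicit conjugacy $\tau = g \circ f$ already constructed at the start of the section, with $f(z) = (z - \sqrt{d})/(z + \sqrt{d})$, $g(z) = t^{-1/3}z$, and $t = (k - \sqrt{d})/(k + \sqrt{d})$; it satisfies $\tau \circ \theta_{d,k} \circ \tau^{-1} = \psi_2$. For any $\beta \in \QQ$ of exact period $n$ under $\theta_{d,k}$, the image $\zeta := \tau(\beta)$ has exact period $n$ under $\psi_2$ and lies in $\QQ(\sqrt{d}, t^{1/3})$, a field of degree at most $6$ over $\QQ$.

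I would then appeal to Lemma~\ref{lem:psi2orbits}, which enumerates every preperiodic point of $\psi_2$ of degree at most $6$. Scanning Figures~\ref{fig:1/z^2ptsa}--\ref{fig:1/z^2ptsd}, the cycles that occur have lengths only $1$, $2$, $3$, $4$, and $6$; the hypothesis $n > 3$ therefore leaves exactly the two cases $n = 4$ and $n = 6$, with $\zeta$ forced to be a primitive $5$th or primitive $7$th root of unity respectively.

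The contradiction comes from a short algebraic observation. Because $\beta \in \QQ$, the element $t^{1/3}\zeta = f(\beta) = (\beta - \sqrt{d})/(\beta + \sqrt{d})$ lies in the quadratic field $\QQ(\sqrt{d})$. Cubing yields $t\zeta^3 \in \QQ(\sqrt{d})$, and since $t \in \QQ(\sqrt{d})^{\times}$ this forces $\zeta^3 \in \QQ(\sqrt{d})$. But $\gcd(3,5) = \gcd(3,7) = 1$, so $\zeta^3$ is again a primitive $5$th or $7$th root of unity and therefore has degree $4$ or $6$ over $\QQ$, which is impossible inside a quadratic extension. I expect the only delicate point to be organizational: one must fix a specific cube root $t^{1/3}$ so that both containments $\zeta \in \QQ(\sqrt{d}, t^{1/3})$ and $t^{1/3}\zeta \in \QQ(\sqrt{d})$ hold simultaneously, but any single choice of cube root works and the subsequent cubing step absorbs the remaining ambiguity.
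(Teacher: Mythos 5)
Your proof is correct, and it takes a genuinely different route from the paper's. The paper argues extrinsically: if $\phi$ had a rational point of period $4$ (resp.\ $6$), one could conjugate over $\QQ$ so that three points of that cycle sit at $0$, $1$, $\infty$; since by Lemma~\ref{lem:psi2orbits} the only low-degree $4$-cycle (resp.\ $6$-cycle) of $\psi_2$ consists of powers of $\zeta_5$ (resp.\ $\zeta_7$), any conjugacy back to $\psi_2$ must send $0,1,\infty$ to such powers, and a short computation (left implicit in the paper) over the finitely many admissible $f\in\PGL_2$ shows the resulting conjugate of $\psi_2$ is never defined over $\QQ$. You instead stay inside the $\theta_{d,k}$ normal form and extract the contradiction from the explicit conjugacy $\tau=g\circ f$: from $t^{1/3}\zeta=f(\beta)\in\QQ(\sqrt d)$ you cube to get $t\zeta^3\in\QQ(\sqrt d)$, hence $\zeta^3\in\QQ(\sqrt d)$, while $\zeta^3$ is still a primitive $5$th or $7$th root of unity and so has degree $4$ or $6$ over $\QQ$ --- impossible in a field of degree at most $2$. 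Your version treats $n=4$ and $n=6$ uniformly, replaces the case-by-case conjugation check with one line of field theory, and only needs the part of Lemma~\ref{lem:psi2orbits} identifying the long cycles with $\zeta_5$- and $\zeta_7$-powers (indeed, the classification of periodic points as odd-order roots of unity of degree at most six would suffice). Two small points to smooth over in a final write-up: a rational periodic point may be $\infty$, but then $f(\beta)=1\in\QQ(\sqrt d)$ and the containment still holds; and when $d$ is a square, $\beta=-\sqrt d$ is excluded because it would force $\zeta=\infty$, of period $2$, while $\QQ(\sqrt d)=\QQ$ only strengthens the degree bound $[\QQ(\sqrt d):\QQ]\le 2$ that your argument actually uses.
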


\begin{proof}
If $\phi$ has  rational points of period~$4$, then it is conjugate over $\QQ$ to a map where three of those points are at $0$, $1$, and $\infty$.  Applying Lemma~\ref{lem:psi2orbits}, we choose $f \in \PGL_2$ mapping these three rational points to three powers of $\zeta_5$.  Conjugating $\psi_2$ by this map does not yield a map defined over $\QQ$.  
The argument for points of period~6 is the same, but using powers of $\zeta_7$.
\end{proof}

\begin{center}
\begin{table}
\caption{Preperiodic structures for twists of $\psi_2(z) = 1/z^2$}

\begin{tabular}{|c|c|}
\hline
$\phi(z)$		& Rational Preperiodic Points Graph				 \\ 
\hline \hline 

$\displaystyle \frac{1}{z^2}$			&
 \xygraph{
        !{<0cm,0cm>;<2cm,0cm>:<0cm,1cm>::}
        !{(0,0) }*+{\bullet_{1}}="a"
        !{(-1,0) }*+{\bullet_{-1}}="b"
         !{(1,0) }*+{\bullet_{0}}="c"
         !{(2,0)}*+{\bullet_{\infty}}="d"
        "a" :@(r,lu)  "a"
        "b":"a"
       "c":@/_/"d"
       "d":@/_/"c"
    } \\ 
 \hline

$\displaystyle \frac{2}{z^2}$			&
\xygraph{
        !{<0cm,0cm>;<2cm,0cm>:<0cm,1cm>::}
               !{(1,0) }*+{\bullet_{0}}="c"
               !{(2,0)}*+{\bullet_{\infty}}="d"
       "c":@/_/"d"
       "d":@/_/"c"
    }\\
\hline

$\displaystyle\frac{z^2-4 z+2}{z^2-2 z+2}$			&
no rational preperiodic points
    \\ 
\hline

$\displaystyle -\frac{4 z}{z^2+2}$	&\xygraph{
        !{<0cm,0cm>;<2cm,0cm>:<0cm,1cm>::}
        !{(0,0) }*+{\bullet_{0}}="a"
        !{(-1,0) }*+{\bullet_{\infty}}="b"
        "a" :@(r,lu)  "a"
        "b":"a"
            } \\
     \hline

$\displaystyle \frac{-z^2+2 z+1}{z^2+2 z-1}$	&  
\xygraph{
        !{<0cm,0cm>;<1cm,0cm>:<0cm,1cm>::}
        !{(1 ,0) }*+{\bullet_{1}}="a"
        !{(-1, 0) }*+{\bullet_{-1}}="b"
        !{(-2.3, .7) }*+{\bullet_{0}}="c"
        !{(-2.3,-0.7)}*+{\bullet_{\infty}}="d"
        "a" :@(r,lu) "a"
        "b":"a"
        "d":"b"
        "c":"b"
    }  \\
   \hline

$\displaystyle -\frac{(z-2) z}{2 z-1}$ &  \xygraph{
        !{<0cm,0cm>;<2cm,0cm>:<0cm,1cm>::}
        !{(-2,0) }*+{\bullet_{0}}="a"
        !{(-3,0) }*+{\bullet_{2}}="b"
        !{(-0.25,0) }*+{\bullet_{1}}="c"
        !{(-1.25,0) }*+{\bullet_{-1}}="d"
        !{(1.5,0) }*+{\bullet_{\infty}}="e"
        !{(.5,0) }*+{\bullet_{\frac 1 2}}="f"
        "a" :@(r,lu)  "a"
        "c" :@(r,lu)  "c"
        "e" :@(r,lu)  "e"
         "b":"a"
         "d":"c"
         "f":"e"
    } \\
 \hline

$\displaystyle \frac{2 z-1}{z^2-1}$	& \xygraph{
        !{<0cm,0cm>;<2cm,0cm>:<0cm,1cm>::}
        !{(0,.5) }*+{\bullet_{0}}="a"
        !{(.5,-.5) }*+{\bullet_{1}}="b"
        !{(-.5,-.5) }*+{\bullet_{\infty}}="c"
        !{(0,1.5) }*+{\bullet_{\frac 1 2}}="d"
        !{(1.1,-1.1) }*+{\bullet_{2}}="e"
        !{(-1.1,-1.1) }*+{\bullet_{-1}}="f"
         !{(0,1.75) }*+{\ }="x"
       "a":@/^/"b"
       "b":@/^/"c"
       "c":@/^/"a"
         "d":"a"
         "e":"b"
         "f":"c"
    }\\
  \hline

\end{tabular}
\label{table: twists 2}
\end{table}
\end{center}

\begin{acknowledgements}
The authors thank  Xander Faber,  Patrick Ingram, Rafe Jones,  and Kevin Pilgrim for helpful comments.  The second author is grateful to the Institute for Computational and Experimental Research in Mathematics (ICERM) for a very productive semester program in complex and arithmetic dynamics during which much of this work was completed, and especially to Ben Hutz and the rest of the Sage projective space working group for the useful code on which our algorithm was originally built.

The algorithm in Section~\ref{sec:algorithm} and complete list of PCF maps constitutes a portion of the third author's Ph.D.~thesis.
\end{acknowledgements}

\newpage

\bibliographystyle{plain}

\affiliationone{
   David Lukas \\
   University of \Hawaii\ at \Manoa \\
   USA
   \email{david.lukas@hawaii.edu\\
   }}
\affiliationtwo{
   Michelle Manes and Diane Yap\\
   Department of Mathematics,\\
   University of \Hawaii\ at \Manoa \\
   USA
   \email{mmanes@math.hawaii.edu\\
      dianey@gmail.com
   }}

\end{document}